\def\ps@pprintTitle{%
 \let\@oddhead\@empty
 \let\@evenhead\@empty
 \def\@oddfoot{}%
 \let\@evenfoot\@oddfoot}
\newenvironment{breakablealgorithm}
  {
   \begin{center}
     \refstepcounter{algorithm}
     \hrule height.8pt depth0pt \kern2pt
     \renewcommand{\caption}[2][\relax]{
       {\raggedright\textbf{\ALG@name~\thealgorithm} ##2\par}%
       \ifx\relax##1\relax 
         \addcontentsline{loa}{algorithm}{\protect\numberline{\thealgorithm}##2}%
       \else 
         \addcontentsline{loa}{algorithm}{\protect\numberline{\thealgorithm}##1}%
       \fi
       \kern2pt\hrule\kern2pt
     }
  }{
     \kern2pt\hrule\relax
   \end{center}
  }
\def\@author#1{\g@addto@macro\elsauthors{\normalsize%
    \def\baselinestretch{1}%
    \upshape\authorsep#1\unskip\textsuperscript{%
      \ifx\@fnmark\@empty\else\unskip\sep\@fnmark\let\sep=,\fi
      \ifx\@corref\@empty\else\unskip\sep\@corref\let\sep=,\fi
      }%
    \def\authorsep{\unskip,\space}%
    \global\let\@fnmark\@empty
    \global\let\@corref\@empty  
    \global\let\sep\@empty}%
    \@eadauthor={#1}
}
\newtheorem{mydef}{Definition}
\newtheorem{theorem}{Theorem}
\newtheorem{corollary}{Corollary}[theorem]
\begin{document}

\begin{frontmatter}

\title{A New Kalman Filter Model for Nonlinear Systems Based on Ellipsoidal Bounding}

\author{Ligang Sun\fnref{fn1}}
\fntext[fn1]{Geodetic Institute, Leibniz Universit\"at Hannover\\
Email address: \{ligang.sun, alkhatib, kargoll, neumann\}@gih.uni-hannover.de}

\author{Hamza Alkhatib\fnref{fn1}}

\author{Boris Kargoll\fnref{fn1}}

\author{Vladik Kreinovich\fnref{fn2}}
\fntext[fn2]{Department of Computer Science, University of Texas at El Paso\\
Email address: vladik@utep.edu}

\author{Ingo Neumann\fnref{fn1}}





\begin{abstract}
In this paper, a new filter model called set-membership Kalman filter for nonlinear state estimation problems was designed, where both random and unknown but bounded uncertainties were considered simultaneously in the discrete-time system. The main loop of this algorithm includes one prediction step and one correction step with measurement information, and the key part in each loop is to solve an optimization problem. The solution of the optimization problem produces the optimal estimation for the state, which is bounded by ellipsoids. The new filter was applied on a highly nonlinear benchmark example and a two-dimensional simulated trajectory estimation problem, in which the new filter behaved better compared with extended Kalman filter results. Sensitivity of the algorithm was discussed in the end.
\end{abstract}

\begin{keyword}
Set-membership Kalman filter, State estimation, Ellipsoidal bounding, Nonlinear programming, Optimization methods
\end{keyword}

\end{frontmatter}



\section{Introduction}

State estimation is applicable to virtually all areas of engineering and science. Any discipline that is concerned with the mathematical modeling of its systems is a likely candidate for state estimation. This includes electrical engineering, mechanical engineering, chemical engineering, aerospace engineering, robotics, dynamical systems' control and many others. Nonlinear filtering can be a difficult and complex subject in the field of state estimation. It is certainly not as mature, cohesive, or well understood as linear filtering. There is still a lot of room for advances and improvement in nonlinear estimation techniques.


The optimal state estimation problem can be summarized as follows: given a mathematical model of a real system, and allowing some state perturbations and noise corrupted measurements, the state of the real system has to be estimated \cite{le2013zonotopic}. The estimation usually bases on the solving of an optimization problem, the estimated result relies on the assumptions made on uncertainties. Developed in the past hundreds years, the stochastic state estimation techniques are most widely applied in the real world. This approach bases on the probabilistic assumptions of the uncertainties in the system, such as Kalman filter \cite{kalman1960new} and extended Kalman filter (EKF) \cite{smith1962application,mcelhoe1966assessment} where uncertain parts (usually noise) in the system are assumed to have certain probability distribution (usually Gaussian distribution).

However, in many cases these probability distributions could be questionable, especially when the real process generating the data are complex so that only simplified models can be practically used in the estimation process \cite{milanese1996optimal}. There is another interesting approach, referred to set-membership uncertainty state estimation. Developed since 1960s \cite{witsenhausen1968sets,schweppe1968recursive,schweppe1973uncertainty}, this approach assumes that the uncertainty is unknown but bounded (UBB). No further assumption was made except for its membership of a given bound. Under this assumption, the optimal estimated state, noisy measurements and uncertainty are in some compact sets, respectively. This new technique is more appropriate in many cases where the bounded description is more realistic than stochastic distributed hypothesis. Classified by the geometrical representations, there are four major methods to bound the uncertainty, which are polytopes \cite{vicino1996sequential,walter1989exact}, ellipsoids \cite{bertsekas1971minimax,polyak2004ellipsoidal,durieu2001multi}, zonotopes \cite{combastel2005state,alamo2005guaranteed,althoff2009safety,le2013zonotopic,schon2005using} and intervals \cite{kreinovich2013computational,ferson2007experimental,kutterer2011recursive}. Polytope can be used to obtain better estimated accuracy, however, one major drawback is its computation load in multi-dimensional nonlinear systems, especially to zonotope. Ellipsoid has been widely used due to its simplicity of propagation, but the Minkovski sum of two ellipsoids is not an ellipsoid anymore, therefore the prorogation of its related algorithm requires solving an optimization problem.

In this paper, a new filter model called set-membership Kalman filter (SKF) for nonlinear systems was designed, in which both random and set-membership uncertainties were considered at the same time. This work extends Benjamin Noack's previous work in his PhD dissertation \cite{noack2014state}, where the linear case was discussed sufficiently. The novel SKF takes UBB uncertainties into account in both process equation and measurement equation, therefore it has a better uncertainty measures. It also keeps the recursive framework of random uncertainties from Kalman filter, thus the advantages of KF are reserved during the prorogation process. A better estimation under these more reliable assumptions is calculated based on solving an optimization problem in each step.

Section 2 gives mathematics preliminaries and dynamical system which would be considered later. Section 3 shows the detailed derivation of this new filter model. Section 4 is the algorithm in a practical form. Section 5 demonstrates how this new filter model works and shows that the SKF behaves better than EKF in some cases. The last section is the conclusion and future work.

\section{Mathematical Model}

\subsection{Preliminaries}
The following definitions, theorems and corollaries are required for the derivation of the new filter model. The detailed proofs were given in \cite{durieu2001multi}.

\begin{mydef}\label{def_ellip}
Given $S$ a positive-definite matrix, denoted by $S>0$, a bounded ellipsoid $\mathcal{E}$ in $\mathds{R}^n$ with nonempty interior is defined as
\begin{equation}
\mathcal{E}=\mathcal{E}(c,S)=\{x\in\mathds{R}^n|(x-c)^TS^{-1}(x-c)\leq 1, S>0\}
\end{equation}
where $c\in\mathds{R}^n$ is called the center of the ellipsoid $\mathcal{E}$, and $S$ is the shape matrix which is positive-definite and specifies the size and orientation of the ellipsoid.
\end{mydef}

\begin{mydef}\label{def_min}
In geometry, the Minkowski sum is an operation of two sets $A$ and $B$ in Euclidean space $\mathds{R}^n$, which is defined by adding each vector in $A$ to each vector in $B$, i.e.,
\begin{equation}
A\oplus B=\{a+b|a\in A, b\in B\}.
\end{equation}
\end{mydef}


Given $K$ ellipsoids of $\mathds{R}^n$
\begin{equation}
\mathcal{E}_k=\mathcal{E}(c_k,S_k), (k=1,2,\dots,K)
\end{equation}
their Minkowski sum is
\begin{equation}
\mathcal{U}_K=\sum_{k=1}^{K}\mathcal{E}_k,
\end{equation}
which is not an ellipsoid anymore but still a convex set.

Denote the problem of finding the smallest ellipsoid (under the criterion of matrix trace) containing the Minkowski sum of the $K$ ellipsoids as Problem $\mathrm{T^+}$:
\begin{equation}
\mathcal{E}^*=\underset{\mathcal{U}_K\subset\mathcal{E}}{\arg\min}\,\mathrm{tr} S\quad\mathrm{(Problem\,T^+)},
\end{equation}
and from \cite{durieu2001multi}, this ellipsoid $\mathcal{E}^*$ exists and is unique.

\begin{theorem}\label{thm4.1}
The center of the optimal ellipsoid $\mathcal{E}^*$ for Problem $T^+$ is given by
\begin{equation}\label{e_center}
c^*=\sum_{k=1}^{K}c_k
\end{equation}
\end{theorem}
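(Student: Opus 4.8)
The plan is to exploit the central symmetry of each ellipsoid, transfer it to the Minkowski sum, and then argue that the unique optimal bounding ellipsoid is forced to share the same center of symmetry. The appeal of this route is that it pins down $c^*$ without ever computing the optimal shape matrix $S^*$, so the trace-minimization over $S$ can be left to a separate step.

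First I would record that each ellipsoid $\mathcal{E}_k=\mathcal{E}(c_k,S_k)$ is invariant under reflection through its own center: if $a\in\mathcal{E}_k$ then $2c_k-a\in\mathcal{E}_k$, since the defining inequality depends on $a$ only through the quadratic form in $(a-c_k)$. Writing $\bar c=\sum_{k=1}^{K}c_k$ and defining the reflection map $R(x)=2\bar c-x$, I would then show $R(\mathcal{U}_K)=\mathcal{U}_K$. Indeed, any $u\in\mathcal{U}_K$ has the form $u=\sum_k a_k$ with $a_k\in\mathcal{E}_k$, and its reflection $2\bar c-u=\sum_k(2c_k-a_k)$ is again a sum of one point from each $\mathcal{E}_k$, hence lies in $\mathcal{U}_K$. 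Thus $\mathcal{U}_K$ is centrally symmetric about $\bar c$.

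Next I would transport this symmetry to the optimal ellipsoid. Let $\mathcal{E}^*=\mathcal{E}(c^*,S^*)$ be the minimizer for Problem $\mathrm{T}^+$. A direct substitution shows that the reflected set $R(\mathcal{E}^*)$ is itself an ellipsoid, namely $\mathcal{E}(2\bar c-c^*,S^*)$, with the \emph{same} shape matrix $S^*$. Because $R$ is a bijective isometry and $\mathcal{U}_K\subseteq\mathcal{E}^*$, applying $R$ yields $\mathcal{U}_K=R(\mathcal{U}_K)\subseteq R(\mathcal{E}^*)=\mathcal{E}(2\bar c-c^*,S^*)$. Hence this reflected ellipsoid is also feasible for Problem $\mathrm{T}^+$, and it attains exactly the same objective value $\mathrm{tr}\,S^*$, since the criterion depends only on the shape matrix and not on the center.

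Finally I would invoke the uniqueness of the minimizer quoted above from \cite{durieu2001multi}: as both $\mathcal{E}(c^*,S^*)$ and $\mathcal{E}(2\bar c-c^*,S^*)$ are optimal with the same trace, they must coincide, forcing $2\bar c-c^*=c^*$, i.e. $c^*=\bar c=\sum_{k=1}^{K}c_k$. I expect the only genuinely delicate points to be the clean transfer of symmetry to the Minkowski sum and the careful check that reflection sends one feasible bounding ellipsoid to another with an unchanged trace; once that is established, uniqueness closes the argument in a single line.
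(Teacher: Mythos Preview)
Your symmetry argument is correct: the Minkowski sum $\mathcal{U}_K$ is indeed centrally symmetric about $\bar c=\sum_k c_k$, reflection through $\bar c$ carries any feasible bounding ellipsoid to another feasible one with the same shape matrix (hence the same trace), and the uniqueness of the minimizer then forces $c^*=\bar c$.

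As for the comparison: the paper does not actually prove this theorem. It states the result and defers all proofs in this subsection to \cite{durieu2001multi}. So there is no in-paper argument to set yours against; your route is a clean, self-contained justification that only borrows the existence--uniqueness statement the paper already imports from the same reference. For what it is worth, the symmetry/reflection idea you use is the standard one and is essentially the argument given in \cite{durieu2001multi} as well, so you have reconstructed the intended proof rather than found an alternative.
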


\begin{theorem}\label{thm4.2}
Let $\mathcal{D}$ be the convex set of all vectors $\alpha\in\mathds{R}^K$ with all $\alpha_k>0$ and $\sum_{k=1}^{K}\alpha_k=1$. For any $\alpha\in\mathcal{D}$, the ellipsoid $\mathcal{E}_\alpha=\mathcal{E}^+(c^*, S_\alpha)$, with $c^*$ defined by \eqref{e_center} and
\begin{equation}
S_\alpha=\sum_{k=1}^{K}\alpha_k^{-1}S_k,
\end{equation}
contains $\mathcal{U}_K$.
\end{theorem}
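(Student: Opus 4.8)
The plan is to reduce the containment claim to a single quadratic inequality and then recognize that inequality as a statement about an orthogonal projection matrix. First I would translate coordinates so that every ellipsoid is centered at the origin: writing $x_k = c_k + y_k$ for a generic point $x_k \in \mathcal{E}_k$, the defining inequality of Definition \ref{def_ellip} becomes $y_k^T S_k^{-1} y_k \le 1$, and an arbitrary point of $\mathcal{U}_K$ takes the form $x = \sum_{k=1}^K x_k = c^* + \sum_{k=1}^K y_k$ by the center formula \eqref{e_center}. Setting $y = x - c^* = \sum_{k=1}^K y_k$, the goal reduces to showing $y^T S_\alpha^{-1} y \le 1$ whenever $y_k^T S_k^{-1} y_k \le 1$ for each $k$. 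Note that $S_\alpha = \sum_{k=1}^K \alpha_k^{-1} S_k$ is positive-definite, being a positive combination of positive-definite matrices, so $S_\alpha^{-1}$ exists.

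Next I would normalize each summand by introducing $z_k = S_k^{-1/2} y_k$, so that the hypothesis reads simply $\|z_k\|^2 \le 1$ while $y_k = S_k^{1/2} z_k$. The key algebraic step is to package the sum as a single matrix--vector product. I would define the block-row matrix $A = [\,\alpha_1^{-1/2} S_1^{1/2}\ \cdots\ \alpha_K^{-1/2} S_K^{1/2}\,]$ and the stacked vector $w = (\alpha_1^{1/2} z_1, \ldots, \alpha_K^{1/2} z_K)^T$. Then $y = \sum_{k=1}^K S_k^{1/2} z_k = Aw$, while a direct computation gives $A A^T = \sum_{k=1}^K \alpha_k^{-1} S_k = S_\alpha$. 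Moreover the constraint $\sum_{k=1}^K \alpha_k = 1$ defining $\mathcal{D}$ yields $\|w\|^2 = \sum_{k=1}^K \alpha_k \|z_k\|^2 \le \sum_{k=1}^K \alpha_k = 1$.

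Finally I would combine these observations. Since $S_\alpha = A A^T$ is invertible, $A$ has full row rank, so the matrix $A^T S_\alpha^{-1} A = A^T (A A^T)^{-1} A$ is the orthogonal projector onto the row space of $A$; its eigenvalues lie in $\{0,1\}$, and in particular $A^T S_\alpha^{-1} A \preceq I$. Therefore $y^T S_\alpha^{-1} y = w^T A^T S_\alpha^{-1} A\, w \le w^T w = \|w\|^2 \le 1$, which is exactly the containment $\mathcal{U}_K \subset \mathcal{E}_\alpha$. The step I expect to be the main obstacle is identifying the right block-matrix factorization so that the weighted shape matrix $S_\alpha$ appears precisely as $A A^T$; once that factorization is in place, the projection bound together with the normalization $\sum_k \alpha_k = 1$ finishes the argument almost mechanically. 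A minor technical point to handle carefully is the existence of the symmetric square roots $S_k^{1/2}$, which is guaranteed by the positive-definiteness of each $S_k$.
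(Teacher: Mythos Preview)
Your argument is correct. The translation to the origin, the block-matrix factorization $A A^T = S_\alpha$, and the projection bound $A^T(AA^T)^{-1}A \preceq I$ all check out, and the normalization $\sum_k \alpha_k = 1$ is used exactly where it must be, namely in bounding $\|w\|^2$.

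As for comparison with the paper: the paper does not actually prove Theorem~\ref{thm4.2}. It states the result in the preliminaries and defers to the reference of Durieu, Walter and Polyak for the detailed proof. The classical argument there proceeds via support functions rather than a block-matrix factorization: one observes that $\mathcal{U}_K \subset \mathcal{E}_\alpha$ is equivalent to $\sum_{k}\sqrt{\xi^T S_k \xi} \le \sqrt{\sum_k \alpha_k^{-1}\,\xi^T S_k \xi}$ for every direction $\xi$, and this follows immediately from the Cauchy--Schwarz inequality $(\sum_k t_k)^2 = (\sum_k \sqrt{\alpha_k}\cdot \alpha_k^{-1/2} t_k)^2 \le (\sum_k \alpha_k)(\sum_k \alpha_k^{-1} t_k^2)$ with $t_k = \sqrt{\xi^T S_k \xi}$. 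That route is shorter and uses no matrix square roots or projection theory. What your approach buys is a purely linear-algebraic picture: the containment becomes the operator inequality $A^T S_\alpha^{-1} A \preceq I$, which makes transparent why the constraint $\sum_k \alpha_k = 1$ is tight and also generalizes cleanly (e.g.\ to operator-valued weights). Both arguments ultimately encode the same Cauchy--Schwarz content, just in dual coordinates.
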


\begin{corollary}\label{cor4.2}
Special case of Theorem (2.2). When $K=2$, we have $\alpha_1+\alpha_2=1$, the $S_\alpha$ can be rewritten as
\begin{equation}
S_{\alpha}=\frac{1}{\alpha_1}S_1+\frac{1}{\alpha_2}S_2=(1+\frac{1}{\beta})S_1+(1+\beta)S_2
\end{equation}
where $\beta$ can be any nonnegative real number.
\end{corollary}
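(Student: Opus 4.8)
The plan is to obtain the reparametrization directly from the defining formula of Theorem~\ref{thm4.2} specialized to $K=2$, and then to perform a single change of variables. First I would set $K=2$ in the expression $S_\alpha=\sum_{k=1}^{K}\alpha_k^{-1}S_k$, which immediately gives $S_\alpha=\alpha_1^{-1}S_1+\alpha_2^{-1}S_2$, and invoke the constraint built into the definition of $\mathcal{D}$, namely $\alpha_1+\alpha_2=1$ with $\alpha_1,\alpha_2>0$. The entire content of the corollary is that the single remaining degree of freedom (one free parameter, since the two weights are tied together by the normalization) can be repackaged as a ratio.

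The key step is the substitution $\beta=\alpha_1/\alpha_2$. Using $\alpha_2=1-\alpha_1$, I would verify the two coefficient identities separately: for the $S_2$ term, $1+\beta=1+\alpha_1/(1-\alpha_1)=1/(1-\alpha_1)=\alpha_2^{-1}$, and for the $S_1$ term, $1+\beta^{-1}=1+(1-\alpha_1)/\alpha_1=\alpha_1^{-1}$. Substituting these two identities into $S_\alpha=\alpha_1^{-1}S_1+\alpha_2^{-1}S_2$ yields exactly $S_\alpha=(1+\beta^{-1})S_1+(1+\beta)S_2$, which is the claimed form.

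Finally I would address the range of $\beta$. Since the admissible weights satisfy $\alpha_1\in(0,1)$, the map $\alpha_1\mapsto\beta=\alpha_1/(1-\alpha_1)$ is a bijection onto $(0,\infty)$, so as $\alpha$ ranges over $\mathcal{D}$ the parameter $\beta$ sweeps out all positive reals, and the two parametrizations describe the same one-parameter family of bounding ellipsoids. I expect no genuine obstacle here: the statement is an elementary change of variables, and the only point worth flagging is that $\beta$ actually ranges over the strictly positive reals rather than over all nonnegative reals as literally written, since $\beta=0$ would force $\alpha_1=0$ and thereby violate the strict positivity required for $\alpha\in\mathcal{D}$.
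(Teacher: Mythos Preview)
Your proposal is correct and essentially identical to the paper's own argument: the paper simply sets $\alpha_2=\tfrac{1}{1+\beta}$ (equivalently $\beta=\alpha_1/\alpha_2$, your substitution) and says the result follows. Your added remark that $\beta$ actually ranges over $(0,\infty)$ rather than $[0,\infty)$ is a fair observation about the statement as written.
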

\begin{proof}
Let $\alpha_2=\frac{1}{1+\beta}$, $\beta\geq 0$ one can easily get above result.
\end{proof}

\begin{theorem}\label{thm4.4}
In the family $\mathcal{E}_\alpha=\mathcal{E}^+(c^*,S_\alpha)$, the minimal-trace ellipsoid containing the sum of the ellipsoids $\mathcal{E}_k=\mathcal{E}^+(c_k,S_k), k=1,2,\dots,K$ is obtained for
\begin{equation}
S_{\alpha^*}=\left(\sum_{k=1}^{K}\sqrt{\mathrm{tr}S_k}\right)\left(\sum_{k=1}^{K}S_k\sqrt{\mathrm{tr}S_k}\right)
\end{equation}
\end{theorem}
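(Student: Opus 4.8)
The plan is to convert the matrix-valued minimization into a one-dimensional scalar problem by exploiting the linearity of the trace. Applying $\mathrm{tr}(\cdot)$ to the shape matrix $S_\alpha$ supplied by Theorem~\ref{thm4.2} gives
\begin{equation}
\mathrm{tr}\,S_\alpha=\sum_{k=1}^{K}\alpha_k^{-1}\,\mathrm{tr}\,S_k,
\end{equation}
so minimizing $\mathrm{tr}\,S_\alpha$ over the family $\mathcal{E}_\alpha$ is equivalent to minimizing the scalar function $f(\alpha)=\sum_{k}\alpha_k^{-1}\,\mathrm{tr}\,S_k$ over the open simplex $\mathcal{D}=\{\alpha:\alpha_k>0,\ \sum_k\alpha_k=1\}$. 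Writing $t_k:=\mathrm{tr}\,S_k$ (each positive, since $S_k>0$), the problem is a clean convex program in the weights alone; the orientation information in the $S_k$ plays no role in the choice of $\alpha$.

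Next I would attack the constrained minimization with a single Lagrange multiplier $\lambda$ for $\sum_k\alpha_k=1$. Setting the gradient of $f(\alpha)-\lambda(\sum_j\alpha_j-1)$ to zero yields $-t_k/\alpha_k^2=\lambda$ for every $k$, hence $\alpha_k\propto\sqrt{t_k}$. Enforcing the normalization fixes the proportionality constant and gives
\begin{equation}
\alpha_k^*=\frac{\sqrt{t_k}}{\sum_{j=1}^{K}\sqrt{t_j}}=\frac{\sqrt{\mathrm{tr}\,S_k}}{\sum_{j=1}^{K}\sqrt{\mathrm{tr}\,S_j}}.
\end{equation}
Substituting these weights into $S_\alpha=\sum_k\alpha_k^{-1}S_k$ then produces the minimizer $S_{\alpha^*}$ claimed in the statement.

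I would close by certifying that this interior stationary point is the \emph{global} minimum, not merely a critical point. The cleanest route is the Cauchy--Schwarz inequality in Engel (sum-of-squares-over-terms) form:
\begin{equation}
\sum_{k=1}^{K}\frac{t_k}{\alpha_k}=\sum_{k=1}^{K}\frac{(\sqrt{t_k})^2}{\alpha_k}\;\geq\;\frac{\left(\sum_{k=1}^{K}\sqrt{t_k}\right)^2}{\sum_{k=1}^{K}\alpha_k}=\left(\sum_{k=1}^{K}\sqrt{\mathrm{tr}\,S_k}\right)^2,
\end{equation}
with equality precisely when $\alpha_k\propto\sqrt{t_k}$, which recovers $\alpha^*$ and pins the optimal trace at $\big(\sum_k\sqrt{\mathrm{tr}\,S_k}\big)^2$. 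Equivalently, since $f$ is a sum of the convex maps $\alpha_k\mapsto t_k/\alpha_k$ on the positive orthant and $\mathcal{D}$ is convex, the stationary point is automatically the unique global minimizer.

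I do not anticipate a serious obstacle: the key observation---that taking the trace decouples the matrix optimization into a separable scalar problem on the simplex---makes every step explicit. The only point requiring care is the boundary of $\mathcal{D}$, where $\alpha_k\to0$ forces $f\to\infty$; this is exactly what guarantees the minimizer lies in the interior and is therefore captured by the Lagrange conditions. When carrying out the final substitution I would also double-check the stated right-hand side, since the form emerging naturally from $\alpha^*$ is $S_{\alpha^*}=\big(\sum_k\sqrt{\mathrm{tr}\,S_k}\big)\big(\sum_k S_k/\sqrt{\mathrm{tr}\,S_k}\big)$, whose trace is consistently $\big(\sum_k\sqrt{\mathrm{tr}\,S_k}\big)^2$ and which agrees dimensionally with the individual $S_k$.
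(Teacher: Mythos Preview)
Your argument is correct and complete. Note, however, that the paper does not actually prove this theorem: the preamble to Section~2.1 states that ``the detailed proofs were given in \cite{durieu2001multi}'', and the theorem is simply quoted from that reference. So there is no in-paper proof to compare against; your Lagrange/Cauchy--Schwarz route is the standard one and is almost certainly what the cited source does as well.

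Your closing remark about the displayed formula is well taken and worth stating more forcefully: the right-hand side in the theorem as printed,
\[
\Bigl(\sum_k\sqrt{\mathrm{tr}\,S_k}\Bigr)\Bigl(\sum_k S_k\sqrt{\mathrm{tr}\,S_k}\Bigr),
\]
is a typo. The correct expression, which is what your derivation produces, is
\[
S_{\alpha^*}=\Bigl(\sum_k\sqrt{\mathrm{tr}\,S_k}\Bigr)\Bigl(\sum_k \frac{S_k}{\sqrt{\mathrm{tr}\,S_k}}\Bigr).
\]
The paper itself confirms this: when the theorem is actually applied in equation~\eqref{S-} (and again in the algorithm in Section~4), the second factor contains $S_k/\sqrt{\mathrm{tr}\,S_k}$, not $S_k\sqrt{\mathrm{tr}\,S_k}$. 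The printed version is also dimensionally wrong and gives the incorrect trace $\sum_k t_k\cdot\sum_k\sqrt{t_k}$ instead of $(\sum_k\sqrt{t_k})^2$. So you have both proved the theorem and caught an error in its statement.
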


\begin{corollary}
Special case of Theorem (2.3). When $K=2$, we have
\begin{equation}
S_{\alpha^*}=(1+\frac{1}{\beta^*})S_1+(1+\beta^*)S_2
\end{equation}
where $\beta^*=\sqrt{\frac{\mathrm{tr}S_1}{\mathrm{tr}S_2}}$.
\end{corollary}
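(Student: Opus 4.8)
The plan is to exploit the one-parameter description already provided by Corollary~\ref{cor4.2} and thereby reduce the minimization over the simplex $\mathcal{D}$ to an elementary single-variable problem. By Corollary~\ref{cor4.2}, for $K=2$ the whole admissible family $\mathcal{E}_\alpha=\mathcal{E}^+(c^*,S_\alpha)$ is captured by a single nonnegative scalar $\beta$ via $S_\alpha=(1+\tfrac1\beta)S_1+(1+\beta)S_2$, and by Theorem~\ref{thm4.2} every such ellipsoid contains $\mathcal{U}_2$. Minimizing the trace over this family is therefore the same as minimizing the scalar function $f(\beta)=\mathrm{tr}\,S_\alpha$ over $\beta>0$.

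Because the trace is linear, I would expand $f(\beta)=\mathrm{tr}\,S_1+\mathrm{tr}\,S_2+\beta^{-1}\,\mathrm{tr}\,S_1+\beta\,\mathrm{tr}\,S_2$, in which $\mathrm{tr}\,S_1$ and $\mathrm{tr}\,S_2$ are fixed strictly positive numbers since $S_1,S_2>0$. Differentiating gives $f'(\beta)=-\beta^{-2}\,\mathrm{tr}\,S_1+\mathrm{tr}\,S_2$; setting $f'(\beta)=0$ yields $\beta^2=\mathrm{tr}\,S_1/\mathrm{tr}\,S_2$, that is, the claimed $\beta^*=\sqrt{\mathrm{tr}\,S_1/\mathrm{tr}\,S_2}$. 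Substituting $\beta^*$ back into $S_\alpha$ reproduces the asserted expression for $S_{\alpha^*}$.

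To finish, I would confirm that this critical point really is the minimizer and is admissible. The second derivative $f''(\beta)=2\beta^{-3}\,\mathrm{tr}\,S_1$ is strictly positive on $(0,\infty)$, so $f$ is strictly convex and the stationary point is its unique global minimum, while $\beta^*>0$ follows from the positivity of the traces, ruling out the boundary cases $\beta\to 0^+$ and $\beta\to\infty$. As an independent check, one may instead specialize the optimal weights of Theorem~\ref{thm4.4} to $K=2$, obtain $\alpha_2^*=\sqrt{\mathrm{tr}\,S_2}/(\sqrt{\mathrm{tr}\,S_1}+\sqrt{\mathrm{tr}\,S_2})$, and verify through $\beta^*=\alpha_2^{*-1}-1$ that both derivations coincide.

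I do not anticipate a genuine obstacle: the statement is a routine scalar calculus exercise once the parametrization of Corollary~\ref{cor4.2} is available. The only steps requiring mild care are the bookkeeping between $\alpha$ and $\beta$ — matching $1/\alpha_1=1+1/\beta$ and $1/\alpha_2=1+\beta$ to the correct shape matrices — and the justification that the interior stationary point, not a boundary value, attains the minimum, which is settled by the strict convexity of $f$.
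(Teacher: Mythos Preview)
Your argument is correct. The paper does not actually supply a proof for this corollary; it is stated immediately after Theorem~\ref{thm4.4} as a direct specialization, so the intended ``proof'' is simply to set $K=2$ in the general formula and simplify.

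Your main route is slightly different: instead of invoking Theorem~\ref{thm4.4}, you use the one-parameter description of Corollary~\ref{cor4.2} and minimize $\mathrm{tr}\,S_\alpha$ over $\beta>0$ by elementary calculus. This is a genuinely self-contained derivation that does not rely on the general $K$-term optimum, and the strict-convexity check you include makes the optimality explicit rather than inherited. The paper's implicit approach is shorter but leans on the harder Theorem~\ref{thm4.4}; your approach trades that dependency for a few lines of scalar calculus. Since you also record the cross-check via the optimal $\alpha$-weights of Theorem~\ref{thm4.4}, your write-up subsumes the paper's intended justification.
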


\subsection{Dynamical System}

Consider the following nonlinear dynamical system:

\begin{gather}
x_{k+1}=f_k(x_k,u_k,w_k,a_{1,k},a_{2,k},...,a_{I,k}) \label{system1}\\
y_k=h_k(x_k,v_k,b_k) \label{system2}
\end{gather}
where $x_k$ is a $n$-dimensional state vector, $u_k$ is the known input vector, $w_k\sim \textrm{N}(0,C_k^u)$ is a Gaussian system noise with covariance matrix $C_k^u$, $a_{i,k}\in\mathcal{E}(0,S_{ik}^u)$ is the unknown but bounded perturbation with shape matrix $S_{ik}^u$.  $i=1,2,\dots, I.$ denotes the $i$th set-membership perturbation in the prediction equation. $v_k\sim \textrm{N}(0,C_k^z)$ is the a Gaussian measurement noise with covariance matrix $C_k^z$, and $b_k\in \mathcal{E}(0, S_k^z)$ is the unknown but bounded perturbation with shape matrix $S_k^z$. In this literature, $u$ and $z$ in the parameters denote they are relative to system equation and measurement equation, respectively. All the notations above represent the information at time $k$.

The following Fig. \ref{diagram} shows an estimated schematic diagram via set-membership Kalman filter in 2D case \cite{althoff2009safety}. Different with standard Kalman filter, where the output is usually an gaussian distribution and the mean of the distribution was regarded as the estimated point, in set-membership Kalman filter, a set containing all the mean values of possible distributions was put out.

\begin{figure}[H]
\centering
\includegraphics[width=.5\linewidth]{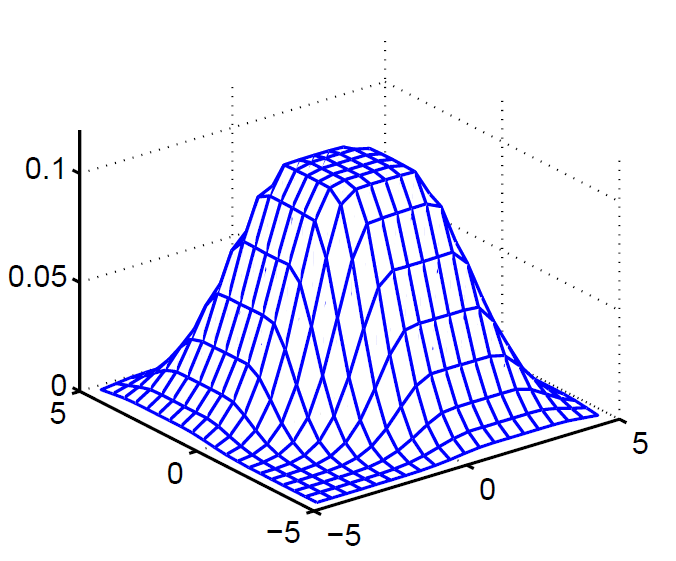}
\caption{Schematic diagram of 2D estimated result under SKF}
\label{diagram}
\end{figure}

\subsection{Linearization}

Recall the process of EKF, linearization is the first step in estimation for nonlinear dynamical systems. Perform Taylor series expansion for system equation \eqref{system1} around the point $(x_k=\hat{x}_k^+,u_k=u_k,w_k=0,a_{i,k}=0, 1\leq i\leq I)$:
\begin{equation}
\begin{split}
x_{k+1}=&f_k(\hat{x}_k^+,u_k,0,0)+\frac{\partial f_k}{\partial x_k}\bigg|_{(\hat{x}_k^+,u_k,0,0)}(x_k-\hat{x}_k^+)+\frac{\partial f_k}{\partial w_k}\bigg|_{(\hat{x}_k^+,u_k,0,0)}w_k\\
&+\sum_{i=1}^{I}\frac{\partial f_k}{\partial a_i}\bigg|_{(\hat{x}_k^+,u_k,0,0)}a_{i,k}+\cdots\\
\approx& f_k(\hat{x}_k^+,u_k,0,0)+F_{x,k}(x_k-\hat{x}_k^+)+F_{w,k}w_k+\sum_{i=1}^{I}F_{ai,k}a_{i,k}\\
=& F_{x,k}x_k+[f_k(\hat{x}_k^+,u_k,0,0)-F_{x,k}\hat{x}_k^+]+F_{w,k}w_k+\sum_{i=1}^{I}F_{ai,k}a_{i,k}\\
=& F_{x,k}x_k+\tilde{u}_k+F_{w,k}w_k+\sum_{i=1}^{I}F_{ai,k}a_{i,k}.
\end{split}
\end{equation}
Here $\tilde{u}_k=f_k(\hat{x}_k^+,u_k,0,0)-F_{x,k}\hat{x}_k^+$.

Take Taylor series expansion for measurement equation \eqref{system2} around point $(x_k=\hat{x}_k^-,v_k=0,b_k=0)$:
\begin{equation}\label{zk}
\begin{split}
y_k=&h_k(\hat{x}_k^-,0,0)+\frac{\partial h_k}{\partial x_k}\bigg|_{(\hat{x}_k^-,0,0)}(x_k-\hat{x}_k^-)+\frac{\partial h_k}{\partial v_k}\bigg|_{(\hat{x}_k^-,0,0)}v_k\\
&+\frac{\partial h_k}{\partial b_k}\bigg|_{(\hat{x}_k^-,0,0)}b_k+\cdots\\
\approx& h_k(\hat{x}_k^-,0,0)+H_{x,k}(x_k-\hat{x}_k^-)+H_{v,k}v_k+H_{b,k}b_k\\
=& H_{x,k}x_k+\tilde{z}_k+H_{v,k}v_k+H_{b,k}b_k.
\end{split}
\end{equation}
Here $\tilde{z}_k=h_k(\hat{x}_k^-,0,0)-H_{x,k}\hat{x}_k^-$. $\tilde{z}_k=0$ if measurement equation is linear.

Then we get the a linearized system for the original system \eqref{system1} and \eqref{system2}.
\begin{gather}
x_{k+1}=F_{x,k}x_k+\tilde{u}_k+F_{w,k}w_k+A_k \label{lsystem1}\\
y_k=H_{x,k}x_k+\tilde{z}_k+H_{v,k}v_k+H_{b,k}b_k \label{lsystem2}
\end{gather}
where $A_k=\sum_{i=1}^{I}F_{ai,k}a_{i,k}$.

Both priori estimation $\hat{x}_k^-$ and posteriori estimation $\hat{x}_k^+$ are random variables. Assume that the expectation and covariance matrix of priori estimation $\hat{x}_k^-$ are $\hat{\mu}_k^-$ and $C_k^-$, the expectation and covariance matrix of posteriori estimation $\hat{x}_k^+$ are $\hat{\mu}_k^+$ and $C_k^+$. All the priori expectations $\hat{\mu}_k^-$ form an ellipsoid centered at $\hat{x}_k^{c-}$ with shape matrix $S_k^-$, i.e., $\hat{\mu}_k^-\in\mathcal{E}(\hat{x}_k^{c-},S_k^-)$. Similarly to posteriori expectation we have $\hat{\mu}_k^+\in \mathcal{E}(\hat{x}_k^{c+}, S_k^+)$.

Our objective is to calculate the explicit expressions of $\hat{x}_k^{c-}$, $C_k^-$, $S_k^-$ and $\hat{x}_k^{c+}$, $C_k^+$, $S_k^+$.

\section{Derivation of Set-membership Kalman Filter}

After getting the linearized dynamical system \eqref{lsystem1} and \eqref{lsystem2}, in this section we derive the set-membership Kalman filter model. Conclusions from section 2.1 are required and the results of this section would be summarized into one algorithm in section 4.

\subsection{Prediction}

Assume that the difference between the true state $x_k$ and the posteriori estimations center $\hat{x}_k^{c+}$ contains two components, i.e., the random part and the UBB part:
\begin{equation}\label{diff+}
x_k-\hat{x}_k^{c+}=\tilde{x}_k^{r+}+\tilde{x}_k^{s+}.
\end{equation}
So from last section we can get $\tilde{x}_k^{r+}\sim\textrm{N}(0,C_k^+)$ and $\tilde{x}_k^{s+}\in\mathcal{E}(0,S_k^+)$. And the mean squared error of posteriori estimation is given by
\begin{equation}\label{cov_xe}
\begin{split}
&\mathrm{E}[(x_k-\hat{x}_k^{c+})(x_k-\hat{x}_k^{c+})^T]=\mathrm{E}[(\tilde{x}_k^{r+}+\tilde{x}_k^{s+})(\tilde{x}_k^{r+}+\tilde{x}_k^{s+})^T]\\
=&\mathrm{E}[\tilde{x}_k^{r+}\tilde{x}_k^{r+,T}]+\mathrm{E}[\tilde{x}_k^{s+}\tilde{x}_k^{s+,T}]=C_k^++\tilde{x}_k^{s+}\tilde{x}_k^{s+,T}.
\end{split}
\end{equation}

Recalling EKF we have
\begin{equation}
\hat{x}_{k+1}^-=F_{x,k}\hat{x}_k^++\tilde{u}_k+F_{w,k}w_k+A_k.
\end{equation}

Notice that $F_{w,k}w_k\sim\mathrm{N}(0,F_{w,k}C_k^uF_{w,k}^T)$, so for a fixed posteriori estimation $\hat{\mu}_k^+\in \mathcal{E}(\hat{x}_k^{c+},S_k^+)$, the predicted state follows by
\begin{equation}\label{pn}
\hat{x}_{k+1}^-=F_{x,k}\hat{\mu}_k^++\tilde{u}_k+A_k+F_{w,k}w_k\sim\mathrm{N}(F_{x,k}\hat{\mu}_k^++\tilde{u}_k+A_k,F_{w,k}C_k^uF_{w,k}^T\big|\hat{x}_k^+).
\end{equation}
Therefore the expectation of $\hat{x}_{k+1}^-$ would be
\begin{equation}\label{mu-}
\hat{\mu}_{k+1}^-=\mathrm{E}(\hat{x}_{k+1}^-)=F_{x,k}\hat{\mu}_k^++\tilde{u}_k+A_k,
\end{equation}
which forms a set $\mathcal{E}(\hat{x}_{k+1}^{c-},S_{k+1}^-)$ when $\hat{x}_k^+$ being ergodic in the set $\mathcal{E}(\hat{x}_k^{c+},S_k^+)$.

Without loss of generality we have
\begin{equation}
\hat{x}_{k+1}^{c-}=F_{x,k}\hat{x}_k^{c+}+\tilde{u}_k.
\end{equation}

Then the difference between the true state and the priori estimation center would be
\begin{equation}
\begin{split}
x_{k+1}-\hat{x}_{k+1}^{c-}&=F_{x,k}(x_k-\hat{x}_k^{c+})+F_ww_k+A_k\\
&=F_{x,k}(\tilde{x}_k^{r+}+\tilde{x}_k^{s+})+F_ww_k+A_k.
\end{split}
\end{equation}

Consider its covariance matrix we have
\begin{equation}\label{cov_predict}
\begin{split}
&\mathrm{E}[(x_{k+1}-\hat{x}_{k+1}^{c-})(x_{k+1}-\hat{x}_{k+1}^{c-})^T]\\
=&\mathrm{E}\{[F_{x,k}(\tilde{x}_k^{r+}+\tilde{x}_k^{s+})+F_ww_k+A_k]\cdot[F_{x,k}(\tilde{x}_k^{r+}+\tilde{x}_k^{s+})+F_ww_k+A_k]^T\}\\
=&F_{x,k}\mathrm{E}[(\tilde{x}_k^{r+}+\tilde{x}_k^{s+})(\tilde{x}_k^{r+}+\tilde{x}_k^{s+})^T]F_{x,k}^T+F_{x,k}\tilde{x}_k^{s+}A_k^T\\
&+F_{w,k}\mathrm{E}(w_kw_k^T)F_{w,k}^T+A_k\hat{x}_k^{s+,T}+A_kA_k^T\\
=&F_{x,k}C_k^+F_{x,k}^T+F_{w,k}C_k^uF_{w,k}^T+(F_{x,k}\tilde{x}_k^{s+}+A_k)(F_{x,k}\tilde{x}_k^{s+}+A_k)^T.
\end{split}
\end{equation}

Compared to equation \eqref{cov_xe}, we find that the predicted random uncertainty can be represented by
\begin{equation}\label{C-}
C_{k+1}^-=F_{x,k}C_k^+F_{x,k}^T+F_{w,k}C_k^uF_{w,k}^T.
\end{equation}

Notice that a possible posteriori mean value $\hat{x}_k^+\in\mathcal{E}(\hat{x}_k^{c+},S_k^+)$, and
\begin{gather}
A_k=\sum_{i=1}^{I}F_{ai}a_{i,k}, a_{i,k}\in\mathcal{E}(0,S_{i,k}^u)\\
F_{ai}a_{i,k}\in\mathcal{E}(0,F_{ai}S_{i,k}^uF_{ai}^T).
\end{gather}

So
\begin{equation}
A_k\in\sum_{i=1}^I\mathcal{E}(0,F_{ai}S_{i,k}^uF_{ai}^T).
\end{equation}
i.e., $A_k$ is one fixed element of a convex set which is the Minkowski sum of $I$ ellipsoids.

Recalling \eqref{mu-} we have
\begin{equation}\label{mu-1}
\begin{split}
\hat{\mu}_{k+1}^-=&\mathrm{E}(\hat{x}_{k+1}^-)=F_{x,k}\hat{\mu}_k^++\tilde{u}_k+A_k\\
\in&\mathcal{E}(F_{x,k}\hat{x}_k^{c+}+\tilde{u}_k,F_{x,k}S_k^+F_{x,k}^T)\oplus\sum_{i=1}^I\mathcal{E}(0,F_{ai}S_{i,k}^uF_{ai}^T)
\end{split}
\end{equation}

Recalling \ref{thm4.4}, there exists an optimal ellipsoid $\mathcal{E}(c_k^*,S_{\alpha^*,k})$ such that
\begin{equation}
\begin{split}
F_{x,k}\hat{\mu}_k^++\tilde{u}_k+A_k&\in\mathcal{E}(F_{x,k}\hat{x}_k^{c+}+\tilde{u}_k,F_{x,k}S_k^+F_{x,k}^T)\oplus\sum_{i=1}^I\mathcal{E}(0,F_{ai}S_{i,k}^uF_{ai}^T)\\
&\subset\mathcal{E}(\hat{x}_{k+1}^{c-},S_{k+1}^-)
\end{split}
\end{equation}

From \ref{thm4.1} we can get the center of the ellipsoid:
\begin{equation}\label{xc-}
\hat{x}_{k+1}^{c-}=F_{x,k}\hat{x}_k^{c+}+\tilde{u}_k.
\end{equation}

From \ref{thm4.4} we can calculate the shape matrix of the ellipsoid:
\begin{equation}\label{S-}
\begin{split}
S_{k+1}^-=&(\sqrt{\mathrm{tr}(F_{x,k}S_k^+F_{x,k}^T)}+\sum_{i=1}^I\sqrt{\mathrm{tr}(F_{a,i}S_{i,k}^uF_{a,i}^T)})\\
&\cdot(\frac{F_{x,k}S_k^+F_{x,k}^T}{\sqrt{\mathrm{tr}(F_{x,k}S_k^+F_{x,k}^T)}}+\sum_{i=1}^I\frac{F_{a,i}S_{i,k}^uF_{a,i}^T}{\sqrt{\mathrm{tr}(F_{a,i}S_{i,k}^uF_{a,i}^T)}})
\end{split}
\end{equation}

Equation \eqref{C-}, \eqref{xc-} and \eqref{S-} gave us the elicit expressions of $C_k^-$, $\hat{x}_k^{c-}$ and $S_k^-$ respectively.

\subsection{Filtering}

Similar with \eqref{diff+}, here we assume that
\begin{equation}
x_k-\hat{x}_k^{c-}=\tilde{x}_k^{r-}+\tilde{x}_k^{s-}.
\end{equation}
So from last section we can get $\tilde{x}_k^{r-}\sim\textrm{N}(0,C_k^-)$ and $\tilde{x}_k^{s-}\in\mathcal{E}(0,S_k^-)$. And the mean squared error of priori estimation is given by
\begin{equation}\label{cov_xe}
\begin{split}
&\mathrm{E}[(x_k-\hat{x}_k^{c-})(x_k-\hat{x}_k^{c-})^T]=\mathrm{E}[(\tilde{x}_k^{r-}+\tilde{x}_k^{s-})(\tilde{x}_k^{r-}+\tilde{x}_k^{s-})^T]\\
=&\mathrm{E}[\tilde{x}_k^{r-}\tilde{x}_k^{r-,T}]+\mathrm{E}[\tilde{x}_k^{s-}\tilde{x}_k^{s-,T}]=C_k^-+\tilde{x}_k^{s-}\tilde{x}_k^{s-,T}.
\end{split}
\end{equation}
\begin{gather}
\tilde{z}_k=h_k(\hat{x}_k^-,0,0)-H_{x,k}\hat{x}_k^-\\
y_k-\tilde{z}_k=H_{x,k}x_k+H_{v,k}v_k+H_{b,k}b_k.
\end{gather}

Therefore, recalling equations \eqref{zk}, \eqref{lsystem1}, \eqref{lsystem2} and the derivation process in EKF, we also assume
\begin{equation}
\begin{split}
\hat{x}_k^+=&\hat{x}_k^-+K_k[y-h_k(\hat{x}_k^-,0,0)]=\hat{x}_k^{-}+K_k[y-\tilde{z}_k(\hat{x}_k^{-})-H_{x,k}\hat{x}_k^{-}]\\
=&(I-K_kH_{x,k})\hat{x}_k^-+K_k[y-\tilde{z}_k(\hat{x}_k^-)].
\end{split}
\end{equation}

The expectations $\hat{\mu}_k^+$ of posteriori estimations $\hat{x}_k^+$ would be
\begin{equation}\label{mu+}
\hat{\mu}_k^+=\mathrm{E}(\hat{x}_k^+)=(I-K_kH_{x,k})\hat{\mu}_k^-+K_k[y-\tilde{z}_k(\hat{\mu}_k^-)].
\end{equation}

The center of the ellipsoid $\mathcal{E}(\hat{x}_k^{c+},S_k^+)$ would be
\begin{equation}
\begin{split}\label{xc+}
\hat{x}_{k}^{c+}=&\hat{x}_k^{c-}+K_k[y-h_k(\hat{x}_k^{c-},0,0)]=\hat{x}_k^{c-}+K_k[y-\tilde{z}_k(\hat{x}_k^{c-})-H_{x,k}\tilde{x}_k^{c-}]\\
=&(I-K_kH_{x,k})\hat{x}_k^{c-}+K_k[y-\tilde{z}_k(\hat{x}_k^{c-})].
\end{split}
\end{equation}

Subtract $\hat{x}_k^{c+}$ from the true state $x_k$ we get:
\begin{equation}
\begin{split}
x_k-\hat{x}_k^{c+}=&x_k-(I-K_kH_{x,k})\hat{x}_k^{c-}-K_k[y_k-\tilde{z}_k(\hat{x}_k^{c-})]\\
=&x_k-(I-K_kH_{x,k})\hat{x}_k^{c-}-K_k(H_{x,k}x_k+H_{v,k}v_k+H_{b,k}b,k)\\
=&(I-K_{k}H_{x,k})(\tilde{x}_k^{r-}+\tilde{x}_k^{s-})-K_k(H_{v,k}v_k+H_{b,k}b_k).
\end{split}
\end{equation}

So the mean squared error of the posteriori estimation center would be
\begin{equation}\label{38}
\begin{split}
&\mathrm{E}[(x_k-\hat{x}_k^{c+})(x_k-\hat{x}_k^{c+})^T]\\
=&\mathrm{E}\{[(I-K_kH_{x,k})(\tilde{x}_k^{r-}+\tilde{x}_k^{s-})+K_k(H_{v,k}v_k+H_{b,k}b_k)]\cdot\\
&[(I-K_kH_{x,k})(\tilde{x}_k^{r-}+\tilde{x}_k^{s-})+K_k(H_{v,k}v_k+H_{b,k}b_k)]^T\}\\
=&(I-K_kH_{x,k})C_k^-(I-K_kH_{x,k})^T+K_kH_{v,k}C_k^zH_{v,k}^TK_k^T\\
&+[(I-K_kH_{x,k})\tilde{x}_k^{s-}-K_kH_{b,k}b_k][(I-K_kH_{x,k})\tilde{x}_k^{s-}-K_kH_{b,k}b_k]^T.
\end{split}
\end{equation}

Compared with equation \eqref{cov_xe}, we get
\begin{equation}\label{cov+}
C_k^+=(I-K_kH_{x,k})C_k^-(I-K_kH_{x,k})^T+K_kH_{v,k}C_k^zH_{v,k}^TK_k^T.
\end{equation}
Similar to Kalman filter (KF), the covariance matrices in the SKF provide us with a measure for uncertainty in our predicted and filtering state estimate, which is a very important feature for various applications of filtering theory since we then know how much to trust our predictions and estimates.

Notice that
\begin{equation}
\hat{\mu}_k^-\in\mathcal{E}(\hat{x}_k^{c-},S_k^-)
\end{equation}
and
\begin{equation}
y_k-\tilde{z}_k=H_{x,k}x_k+H_{v,k}v_k+H_{b,k}b_k\in\mathcal{E}(H_{x,k}x_k+H_{v,k}v_k,H_{b,k}S_k^zH_{b,k}^T).
\end{equation}

So back to equation \eqref{mu+} we have
\begin{equation}
\begin{split}
\hat{\mu}_k^+&=(I-K_kH_{x,k})\hat{\mu}_k^-+K_k(y_k-\tilde{z}_k)\\
&\in (I-K_kH_{x,k})\mathcal{E}(\hat{x}_k^{c-},S_k^-)\oplus K_k\mathcal{E}(H_{x,k}x_k+H_{v,k}v_k,H_{b,k}S_k^zH_{b,k}^T)\\
&=\mathcal{E}[(I-K_kH_{x,k})\hat{x}_k^{c-},(I-K_kH_{x,k})S_k^-(I-K_kH_{x,k})^T]\\
&\oplus\mathcal{E}[K_k(H_{x,k}x_k+H_{v,k}v_k),K_kH_{b,k}S_k^zH_{b,k}^TK_k^T]\subset\mathcal{E}(\hat{x}^{c+}_k,S_k^+),
\end{split}
\end{equation}
where the midpoint is exactly in accordance with our previous assumption \eqref{xc+}:
\begin{equation}
\hat{x}_k^{c+}=(I-K_kH_{x,k})\hat{x}_k^{c-}+K_k[y-\tilde{z}_k(\hat{x}_k^{c-})],
\end{equation}
and from Corollary \ref{cor4.2} we have
\begin{equation}\label{shape+}
S_k^+(\beta)=(1+\frac{1}{\beta})(I-K_kH_{x,k})S_k^-(I-K_kH_{x,k})^T+(1+\beta)K_kH_{b,k}S_k^zH_{b,k}^TK_k^T.
\end{equation}

\subsection{Optimization Problem}
Now comparing to its counterpart in EKF, the only thing left is to derive the new optimal Kalman gain, which should minimize the mean square error of the posteriori estimation.

Here we introduce another parameter $\eta\in[0,1]$ to balance the random uncertainty and set-membership in the dynamical system, and define the following cost function as:
\begin{equation}\label{costf}
J(\beta)=(1-\eta)\mathrm{tr}(C_k^+)+\eta\mathrm{tr}(S_k^+(\beta))
\end{equation}
which represents the global uncertainty of the  posteriori estimation. The new optimal Kalman gain to be found should be used to minimize this cost function in a comprehensive way.

Plugging \eqref{cov_xe} and \eqref{shape+} into \eqref{costf} we get:
\begin{equation}
\begin{split}
J(\beta)=&(1-\eta)\mathrm{tr}[(I-K_kH_{x,k})C_k^-(I-K_kH_{x,k})^T]\\
&+(1-\eta)\mathrm{tr}[K_kH_{v,k}C_k^zH_{v,k}^TK_k^T]\\
&+\eta(1+\frac{1}{\beta})\mathrm{tr}[(I-K_kH_{x,k})S_k^-(I-K_kH_{x,k})^T]\\
&+\eta(1+\beta)\mathrm{tr}(K_kH_{b,k}S_k^zH_{b,k}^TK_k^T)\\
\triangleq &(1-\eta)\mathrm{tr}[(I-K_kH_{x,k})C_k^-(I-K_kH_{x,k})^T]\\
&+(1-\eta)\mathrm{tr}[K_kH_{v,k}C_k^zH_{v,k}^TK_k^T]+\eta(1+\frac{1}{\beta})M+\eta(1+\beta)N.
\end{split}
\end{equation}
where $M$ and $N$ are defined directly from above.

Notice that the cost function $J$ relies on two variables $K_k$ and $\beta$. Firstly we minimize $J$ respect with $\beta$.

Since $M>0$ and $N>0$, therefore
\begin{equation}
(1+\frac{1}{\beta})M+(1+\beta)N=M+N+\frac{1}{\beta}M+\beta N\geq M+N+2\sqrt{MN}.
\end{equation}
When $\frac{1}{\beta}M=\beta N$, i.e., $M=\beta^2N,\beta=\beta_1=\sqrt{\frac{M}{N}}$, we have
\begin{equation}
(1+\frac{1}{\beta_1})M+(1+\beta_1)N=M+N+2\sqrt{MN}=(\sqrt{M}+\sqrt{N})^2
\end{equation}
Therefore we can find the local minimum point of function $J$ with respect to $\beta$:
\begin{equation}\label{G}
\begin{split}
J(\beta_1)=&(1-\eta)\mathrm{tr}[(I-K_kH_{x,k})C_k^-(I-K_kH_{x,k})^T]\\
&+(1-\eta)\mathrm{tr}[K_kH_{v,k}C_k^zH_{v,k}^TK_k^T]+\eta(\sqrt{M}+\sqrt{N})^2.
\end{split}
\end{equation}

Next we calculate the global minimum by taking $K_k$ into account.

Notice that
\begin{equation}
\begin{split}
\frac{\partial\sqrt{M}}{\partial K}=&\frac{1}{2}M^{-\frac{1}{2}}\frac{\partial M}{\partial K}=-\frac{1}{2\sqrt{M}}(I-K_kH_{x,k})(S_k^{-,T}+S_k^-)H_{x,k}^T\\
=&-\frac{1}{\sqrt{M}}(I-K_kH_{x,k})S_k^-H_{x,k}^T
\end{split}
\end{equation}
\begin{equation}
\begin{split}
\frac{\partial\sqrt{N}}{\partial K}=&\frac{1}{2}N^{-\frac{1}{2}}\frac{\partial N}{\partial K}=\frac{1}{2\sqrt{N}}K_kH_{b,k}(S_k^{z,T}+S_k^z)H_{b,k}^T\\
&=\frac{1}{\sqrt{N}}K_kH_{b,k}S_k^zH_{b,k}^T
\end{split}
\end{equation}

Then
\begin{equation}
\begin{split}
\frac{\partial J}{\partial K_k}=&2(1-\eta)(K_kH_{x,k}-I)C_k^-H_{x,k}^T+2(1-\eta)K_kH_{v,k}C_k^zH_{v,k}^T\\
&+2\eta(1+\frac{1}{\beta})(K_kH_{x,k}-I)S_k^-H_{x,k}^T+2\eta(1+\beta)K_kH_{b,k}S_k^zH_{b,k}^T.
\end{split}
\end{equation}

Let$\frac{\partial G_1}{\partial K_k}=0$ and solve for $K_k$, we get an adaptive Kalman gain:
\begin{equation}\label{KG}
\begin{split}
K_k=&[(1-\eta)C_k^-H_{x,k}^T+\eta(1+\frac{1}{\beta})S_k^-H_{x,k}^T]\cdot[(1-\eta)H_{x,k}C_k^-H_{x,k}^T\\
&+(1-\eta)H_{v,k}C_k^zH_{v,k}^T+\eta(1+\frac{1}{\beta})H_{x,k}S_k^-H_{x,k}^T+\eta(1+\beta)H_{b,k}S_k^zH_{b,k}^T]^{-1}
\end{split}
\end{equation}

Now we get the elicit expression of the cost function \eqref{costf} by collecting \eqref{cov+}, \eqref{shape+} and \eqref{KG}. All the procedures in this filtering step rely on the solution of the following optimization problem.

\begin{equation}\label{optimization}
\begin{split}
&\underset{\beta}{\min}\,\,\,J(\beta)\\
&\mathrm{s.t.} \,\,\,\beta\in [0,+\infty) \subset \mathds{R}^1
\end{split}
\end{equation}
where the cost function $J(\beta)$ was defined in \eqref{costf} and the solution of above optimization problem was denoted by $\beta^*$. Putting $\beta^*$ into \eqref{KG}, \eqref{cov+} and \eqref{shape+} and we finished the filtering step.

Here are three remarks about this optimization problem:
\begin{enumerate}[(1)]
\item Problem \eqref{optimization} is a nonlinear programming problem since the objective function \eqref{costf} is nonlinear.
\item Problem \eqref{optimization} is a convex optimization problem \cite{noack2014state}. Therefore, any existing local minimum is a global minimum.
\item Usually, it is hard to solve a nonlinear programming problem due to the constrained equations or inequalities. MATLAB function \emph{fminsearch} is an efficient way to solve the problem \eqref{optimization}. Further, an advanced toolbox \emph{INTLAB} can also be used \cite{Ru99a}.
\end{enumerate}

The parameter $\eta$ was introduced to balance the random uncertainty and set-membership uncertainty. There are three very interesting cases need to be noticed \cite{durieu2001multi}.

When $\eta=\frac{1}{2}$, the stochastic uncertainty and set-membership uncertainty have the same weight and $K(p)$ contains no $\alpha$ in this case. This solution is recommended to users when there is no expert-based information about $\eta$ available.

When $\eta=0$,
\begin{equation}
K_k(\beta)=C_k^-H_{x,k}^T\cdot[H_{x,k}C_k^-H_{x,k}^T+H_{v,k}C_k^zH_{v,k}^T]^{-1}
\end{equation}
which is exactly the Kalman gain in the standard EKF \cite{simon2006optimal}.

When $\eta=1$, the model now only contains set-membership uncertainty. In this case,
\begin{equation}
K_k(\beta)=(1+\frac{1}{\beta})S_k^-H_{x,k}^T\cdot[(1+\frac{1}{\beta})H_{x,k}S_k^-H_{x,k}^T+(1+\beta)H_{b,k}S_k^zH_{b,k}^T]^{-1}.
\end{equation}

\section{Algorithm}

An algorithm for SKF was summarized according to previous derivation.

\hfill

\begin{breakablealgorithm}
  \caption{Set-membership Kalman filter model}
  \begin{algorithmic}[1]

  \State  \textbf{Initialization}:
    \begin{enumerate}[(1)]
    \item Initial state midpoint $\hat{x}_0^{c+}=x_0$.
    \item Initial estimated random covariance matrix $C_0^+$.
    \item Initial estimated set-membership shape matrix $S_0^+$.
    \end{enumerate}

  \For{k=1,2,\dots,K}
    \State \textbf{Input of Prediction Step}:
    \begin{enumerate}[(1)]
    \item Point post-estimation $\hat{x}_k^+$, with estimated covariance $C_k^+$ and shape matrix $S_k^+$.
    \item Nonlinear system model
    \begin{equation}
    x_{k+1}=f_k(x_k,u_k,w_k,a_{1,k},a_{2,k},...,a_{I,k}),
    \end{equation}
    where $w_k\sim\textrm{N}(0,C_k^u)$ and $a_{i,k}\in \mathcal{E}(0,S_{i,k}^u)$, $i=1,2,\dots, I.$
    \item Control input $u_k$, random noise covariance $C_k^u$ and shape matrices $S_{i,k}^u$, $i=1,2,\dots, I.$ for set-membership uncertainty.
    \end{enumerate}

    \State \textbf{Calculation of Prediction Step}:
    \begin{enumerate}[(1)]
\item Computation of error covariance matrix $C_{k+1}^-$ according to
\begin{equation}
C_{k+1}^-=F_{x,k}C_k^+F_{x,k}^T+F_{w,k}C_k^uF_{w,k}^T.
\end{equation}
\item The center of the priori ellipsoid:
\begin{equation}
\hat{x}_{k+1}^{c-}=F_{x,k}\hat{x}_k^{c+}+\tilde{u}_k.
\end{equation}
\item The shape matrix of the priori ellipsoid:
\begin{equation}
\begin{split}
S_{k+1}^-=&(\sqrt{\mathrm{tr}(F_{x,k}S_k^+F_{x,k}^T)}+\sum_{i=1}^I\sqrt{\mathrm{tr}(F_{a,i}S_{i,k}^uF_{a,i}^T)})\\
&\cdot(\frac{F_{x,k}S_k^+F_{x,k}^T}{\sqrt{\mathrm{tr}(F_{x,k}S_k^+F_{x,k}^T)}}+\sum_{i=1}^I\frac{F_{a,i}S_{i,k}^uF_{a,i}^T}{\sqrt{\mathrm{tr}(F_{a,i}S_{i,k}^uF_{a,i}^T)}}).
\end{split}
\end{equation}
The predicted point estimate $x_{k+1}^-$ is characterized by the random error $C_{k+1}^-$ and the set-membership error by $\hat{x}_{k+1}^{c-}$ and $S_{k+1}^-$.
\end{enumerate}

    \State \textbf{Output of Prediction Step}:

    Priori estimated state: $\hat{x}_k^{c-}$, $C_k^-$, and $S_k^-$.

    \State \textbf{Input of Filtering Step}:
    \begin{enumerate}[(1)]
\item Priori or predicted  estimate $\hat{x}_k^-$ with error covariance matrix $C_k^-$ and ellipsoid center $\hat{x}_k^{c-}$ and shape matrix $S_k^-$.
\item Nonlinear measurement model:
\begin{equation}
y_k=h_k(x_k,v_k,b_k),
\end{equation}
where $v_k\sim\textrm{N}(0,C_k^z)$ and $b_k\in \mathcal{E}(0,S_k^z)$.
\item Observation $y_k$, sensor noise with random covariance $C_k^z$ and set-membership shape matrix $S_k^z$.
\item $\tilde{z}_k(\hat{x}_k^-)=h_k(\hat{x}_k^-,0,0)-H_{x,k}\hat{x}_k^-$.
\item Weighting parameter $\eta$.
\end{enumerate}

    \State \textbf{Calculation of Filtering Step}:

\begin{enumerate}[(1)]
\item For given weighting parameter $\eta$, the optimal gain factor $K_k$ is
\begin{equation}
\begin{split}
K_k(\beta)=&[(1-\eta)C_k^-H_{x,k}^T+\eta(1+\frac{1}{\beta})S_k^-H_{x,k}^T]\cdot[(1-\eta)H_{x,k}C_k^-H_{x,k}^T\\
&+(1-\eta)H_{v,k}C_k^zH_{v,k}^T+\eta(1+\frac{1}{\beta})H_{x,k}S_k^-H_{x,k}^T\\
&+\eta(1+\beta)H_{b,k}S_k^zH_{b,k}^T]^{-1}.
\end{split}
\end{equation}
\item Computation of the center of updated estimate $\hat{x}_k^+$ by means of
\begin{equation}
\hat{x}_k^{c+}=(I-K_kH_{x,k})\hat{x}_k^{c-}+K_k[y-\tilde{z}_k(\hat{x}_k^{c-})].
\end{equation}
\item Computation of updated error covariance matrix $C_k^+$ by
\begin{equation}
C_k^+(\beta)=(I-K_kH_{x,k})C_k^-(I-K_kH_{x,k})^T+K_kH_{v,k}C_k^zH_{v,k}^TK_k^T.
\end{equation}
\item Update the shape matrix $S_k^+$ by
\begin{equation}
S_k^+(\beta)=(1+\frac{1}{\beta})(I-K_kH_{x,k})S_k^-(I-K_kH_{x,k})^T+(1+\beta)K_kH_{b,k}S_k^zH_{b,k}^TK_k^T.
\end{equation}
\item The optimal parameter $\beta^*$ can be solved by
\begin{equation}\label{op}
\beta^*=\arg \min\{(1-\eta)\mathrm{tr}[C_k^+(\beta)]+\eta\mathrm{tr}[S_k^+(\beta)]\}.
\end{equation}
The updated point estimate $\hat{x}_k^+$ is characterized by random error characteristic $C_k^+=C_k^+(\beta)$ and set-membership error description $S_k^+=S_k^+(\beta)$. Put $\beta^*$ into above 4 functions to get the optimal output.
\end{enumerate}

    \State \textbf{Output of Filtering Step}:

    Posteriori estimated state: $\hat{x}_k^{c+}$, $C_k^+(\beta^*)$, and $S_k^+(\beta^*)$.

\EndFor

\end{algorithmic}
\end{breakablealgorithm}

\section{Applications}

\subsection{Example 1: Highly Nonlinear Benchmark Example}

Consider the following example:
\begin{gather}
x_{k+1}=\frac{1}{2}x_k+\frac{25x_k}{1+x_k^2}+8\cos[1.2(k-1)]+w_k+a_k,\\
y_k=\frac{1}{20}x_k^2+v_k+b_k.
\end{gather}

where $x_k$ is a scalar, $u_k=8\cos[1.2(k-1)]$ is the input vector, $w_k\sim \textrm{N}(0,1)$ is a Gaussian process noise, $a_{i,k}\in\mathcal{E}(0,9)$ is the unknown but bounded perturbation, in this 1-D case the ellipsoid is the interval $[-3,3]$. $v_k\sim \textrm{N}(0,1)$ is the a Gaussian measurement noise, and $b_k\in \mathcal{E}(0,4)$ is the unknown but bounded perturbation in the interval $[-2,2]$. Initial true state is $x_0=0.1$, initial state estimate as $\hat{x}_0=x_0$, initial estimation covariance matrix is $C_0^+=2$ and initial shape matrix is $S_0^+=1\times10^{-3}$. We used a simulation length of 50 time steps. Weight parameter $\eta=0.5$.

This system was regarded as a benchmark in the nonlinear estimation theory \cite{kitagawa1987non}\cite{gordon1993novel}, and it is usually used to demonstrate the drawbacks of EKF comparing with particle filter \cite{simon2006optimal}. The high degree of nonlinearity in both the process and measurement equations makes this system a very difficult state estimation problem for a Kalman filter. We use this example to show the new SKF behaves better than the traditional first order EKF when some set-membership uncertainties are included in the system.

We repeated this simulation for 100 times. And Fig. \ref{com4} shows the comparison results between SKF and EKF at time $k=25,50,75,100$.

\begin{figure}[H]
\centering
\begin{subfigure}[b]{.49\linewidth}
\includegraphics[width=\linewidth]{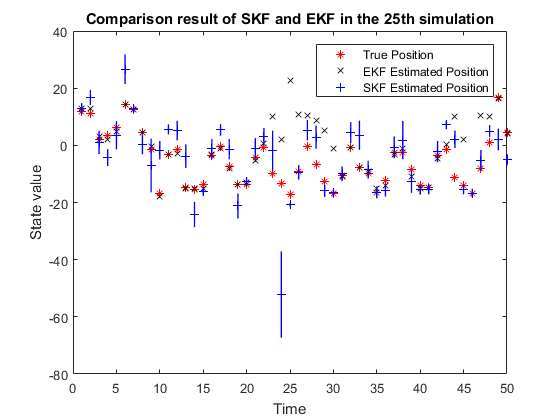}
\end{subfigure}
\begin{subfigure}[b]{.49\linewidth}
\includegraphics[width=\linewidth]{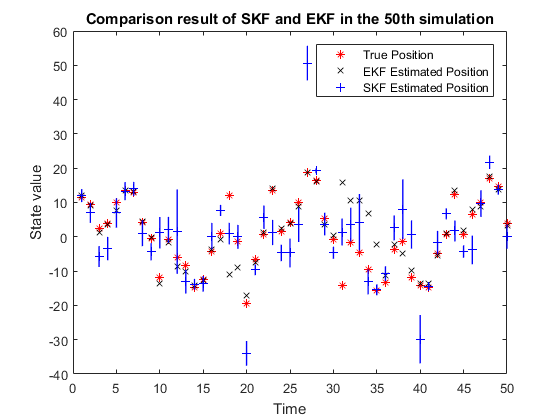}
\end{subfigure}
\begin{subfigure}[b]{.49\linewidth}
\includegraphics[width=\linewidth]{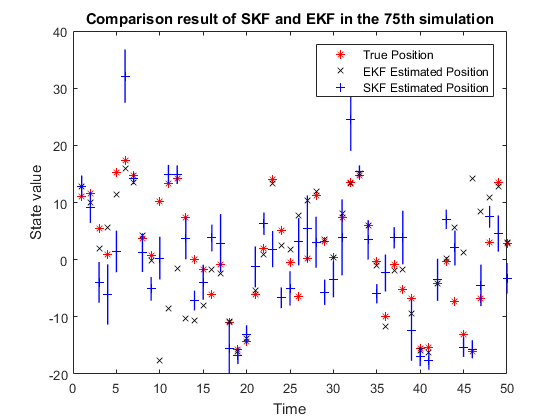}
\end{subfigure}
\begin{subfigure}[b]{.49\linewidth}
\includegraphics[width=\linewidth]{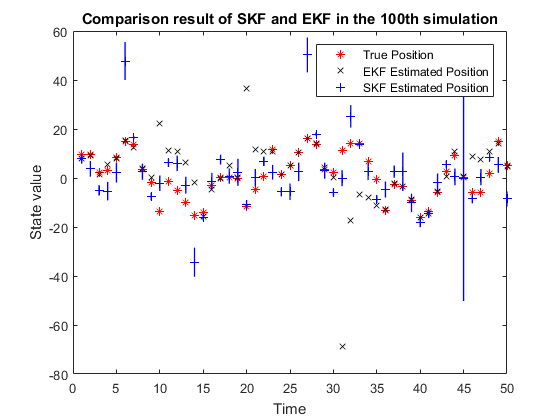}
\end{subfigure}
\caption{Comparison results in 4 simulations}
\label{com4}
\end{figure}

In above figures, the red stars denote the true positions of the state, the black crosses represent the estimated positions via EKF, the blue lines give the estimated ellipsoids (in this 1D case they are intervals) via SKF, and the blue plus signs mark the centers of the output ellipsoids. The center of the ellipsoid given by the new SKF is different with the traditional estimation via EKF, as what we expected, the ellipsoids include the true positions sometimes.

To further compare this new method with EKF, we calculate the distance vectors $d_s, d_e$ of SKF and EKF with the true states, respectively. Each distance vector is $50\times1$ for the total $50$ steps in every simulation. Table \ref{table1} shows the detailed $l_2$ norm comparison of these two distance vectors in $10$ simulations ($k=1,11,...,91$). The second row headed by $\mathrm{S}$ shows the distance error via SKF, and the third row headed by $\mathrm{E}$ shows the counterpart via EKF. We use the $l_2$ norm here as a generic measure of the distance between the estimated data and the true data, but other norms like $l_1$ and $l_{\infty}$ are possible for use. Without loss of generality, we choose the midpoints of these ellipsoids for comparison.

\begin{table}
\scriptsize
\begin{tabular}{|p{0.12cm}|c|c|c|c|c|c|c|c|c|c|}
  \hline
   k & 1 & 11 & 21 & 31 & 41 & 51 & 61 & 71 & 81 & 91 \\
  \hline
  S & 53.59 & 116.38 & 70.38 & 102.95 & 100.16 & 75.81 & 82.11 & 48.71 & 66.30 & 81.55 \\
  \hline
  E & 72.49 & 116.64 & 230.08 & 234.69 & 80.73 & 31.32 & 109.53 & 79.69 & 8.63 & 64.95 \\
  \hline
\end{tabular}
\caption{Comparison of SKF and EKF in 10 simulations}
\label{table1}
\end{table}

In the whole 100-time simulation experiments, the overall $l_2$ norm of the distance vector under SKF is 148.70, with its counterpart in extended Kalman filter 192.29. The new SKF behaved much better than EKF in these 100 simulations. However, this does not mean the SKF is always a better algorithm comparing with EKF, since it is also possible to get opposite results when repeating this experiment. 

\subsection{Example 2: Two-Dimensional Trajectory Estimation}


A vehicle moves on a plane with a curved trajectory \cite{alkhatib2008comparison}. The state vector $x=(x,y,v_x,v_y)$ contains positions and velocities of the target, in x-direction and y-direction, respectively. After linearization, we do not consider the acceleration process anymore, and the mathematical model of this vehicle was assumed as following:
\begin{equation}
\mathbf{x}_{k+1}=F_k\mathbf{x}_k+\mathbf{w}_k+\mathbf{a}_k
\end{equation}
where $\mathbf{x}_k=(x_k,y_k,v_{x,k},v_{y,k})$ is the state vector at time $t_k$. The transition matrix $F_k$ is designed by:
\begin{equation}
F_k=\left(
  \begin{array}{cccc}
    1 & 0 & dt & 0 \\
    0 & 1 & 0 & dt \\
    0 & 0 & 1 & 0 \\
    0 & 0 & 0 & 1 \\
  \end{array}
\right).
\end{equation}
$\mathbf{w_k}$ which representing random uncertainty is gaussian with covariance matrix $C_k^u$, and $\mathbf{a_k}$ is the unknown but bounded uncertainty, which was bounded by an ellipsoid with shape matrix $S_k^u$. In total 300 points were observed and time step $dt=0.1$ seconds. The units of time, distance, angle are second, meter and degree, respectively.

In this experiment, two observation stations $S_1=[s_{12}, s_{12}]$ and $S_2=[s_{21}, s_{22}]$ were arranged to make the measurements. Each station measured the distance and the direction angle of the vehicle. Here is the measurement equation:
\begin{equation}
\mathbf{y}_k=h_k(\mathbf{x}_k,v_k,b_k).
\end{equation}
\begin{equation}
\mathbf{y}_k=\left(
  \begin{array}{c}
    d_1 \\
    d_2 \\
    \theta_1 \\
    \theta_2 \\
  \end{array}
\right)=\left(
          \begin{array}{c}
            \sqrt{[x-s_{11}]^2+[y-s_{12}]^2} \\
            \sqrt{[x-s_{21}]^2+[y-s_{22}]^2} \\
            \arctan [(y-s_{12})/(x-s_{11})] \\
            \arctan [(y-s_{22})/(x-s_{21})] \\
          \end{array}
        \right)+\mathbf{v}_k+\mathbf{b}_k
\end{equation}
$\mathbf{v_k}$ which representing random uncertainty is gaussian with covariance matrix $C_k^z$, and $\mathbf{b_k}$ is the unknown but bounded uncertainty, which was bounded by an ellipsoid with shape matrix $S_k^z$.

The initial state, estimated covariance matrix and shape matrix are given by: $x_0=(0,0,0,0)$, $C_0^+=\mathrm{diag}(0.01,0.01,0.01,0.01)$ and $C_0^+=\mathrm{diag}(1\times10^{-6},1\times10^{-6},1\times10^{-6},1\times10^{-6})$.

The initial covariance matrices in process equation and measurement equation are given by:
\begin{equation}\label{Q}
C^u_0=\left(
                  \begin{array}{cccc}
                    0.0033 & 0 & 0.005 & 0 \\
                    0 & 0.0033 & 0 & 0.005 \\
                    0.005 & 0 &  0.01 & 0 \\
                    0 & 0.005 & 0 & 0.01 \\
                  \end{array}
                \right),
C^z_0=\left(
                  \begin{array}{cccc}
                    0.005^2 & 0 & 0 & 0 \\
                    0 & 0.005^2 & 0 & 0 \\
                    0 & 0 &  0.005^2 & 0 \\
                    0 & 0 & 0 & 0.005^2 \\
                  \end{array}
                \right).
\end{equation}

The initial shape matrices of set-membership uncertainties in process equation and measurement equation are setting by:
\begin{equation}\label{set_membership_setting}
S^u_0=\left(
      \begin{array}{cccc}
        1^2 & 0 & 0 & 0 \\
        0 & 1^2 & 0 & 0 \\
        0 & 0 & 0.5^2 & 0 \\
        0 & 0 & 0 & 0.5^2 \\
      \end{array}
    \right),
S^z_0=\left(
      \begin{array}{cccc}
        0.01^2 & 0 & 0 & 0 \\
        0 & 0.01^2 & 0 & 0 \\
        0 & 0 & (\frac{\pi}{180})^2 & 0 \\
        0 & 0 & 0 & (\frac{\pi}{180})^2 \\
      \end{array}
    \right).
\end{equation}

Weight parameter $\eta=0.5$.


Below eight different trajectories were estimated by EKF and SKF from eight different data sets. The following Fig. \ref{8trajectory} shows the estimation results. Red stars mark the true position according to the reference data, black crosses denotes the estimated position via EKF, and the blue plus signs are the geometry centers of the ellipsoids.

\begin{figure}[H]
\centering
\begin{subfigure}[b]{.49\linewidth}
\includegraphics[width=\linewidth]{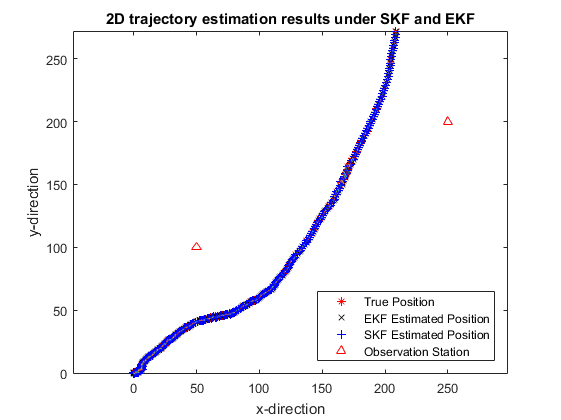}
\caption{Trajectory 1}
\end{subfigure}
\begin{subfigure}[b]{.49\linewidth}
\includegraphics[width=\linewidth]{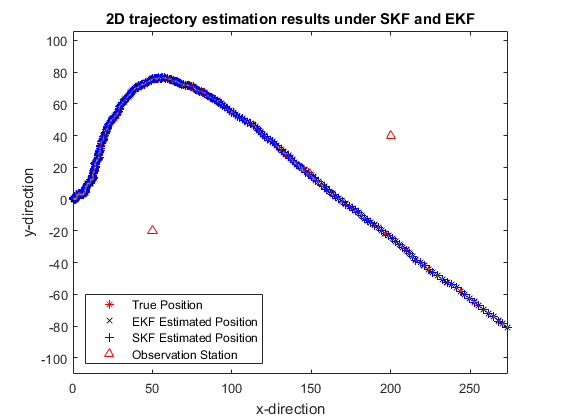}
\caption{Trajectory 2}
\end{subfigure}

\begin{subfigure}[b]{.49\linewidth}
\includegraphics[width=\linewidth]{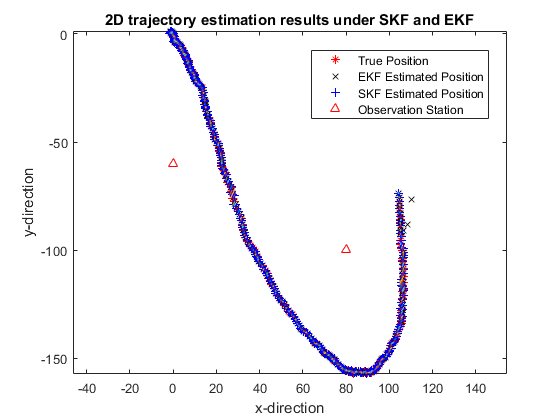}
\caption{Trajectory 3}
\end{subfigure}
\begin{subfigure}[b]{.49\linewidth}
\includegraphics[width=\linewidth]{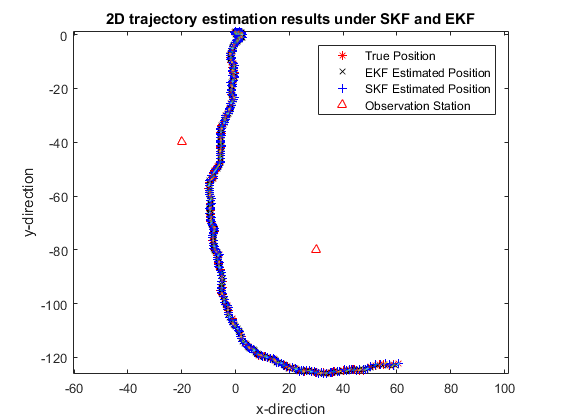}
\caption{Trajectory 4}
\end{subfigure}
\end{figure}

\begin{figure}[H]
\centering
\begin{subfigure}[b]{.49\linewidth}
\includegraphics[width=\linewidth]{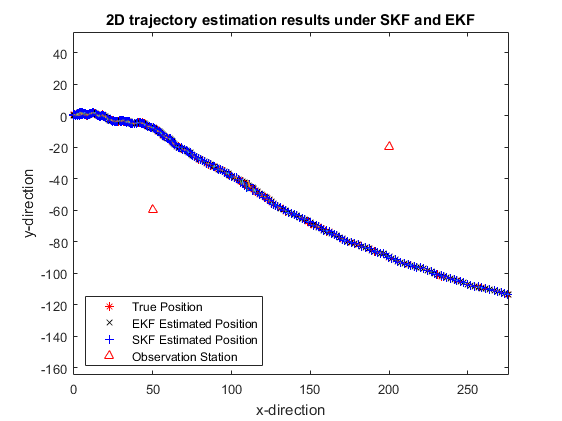}
\caption{Trajectory 5}
\end{subfigure}
\begin{subfigure}[b]{.49\linewidth}
\includegraphics[width=\linewidth]{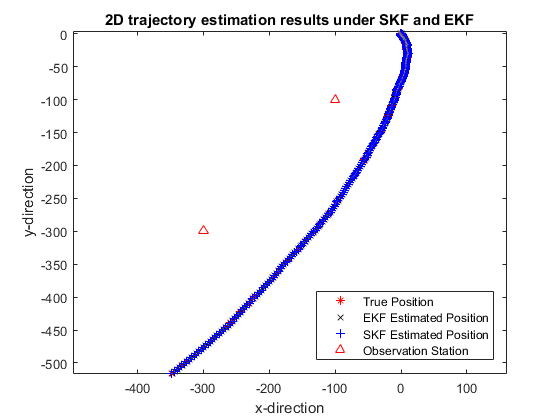}
\caption{Trajectory 6}
\end{subfigure}

\begin{subfigure}[b]{.49\linewidth}
\includegraphics[width=\linewidth]{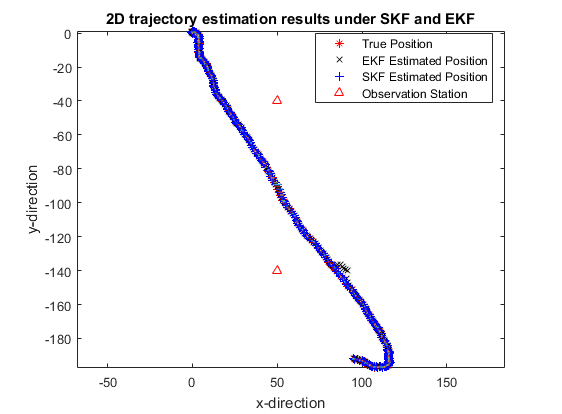}
\caption{Trajectory 7}
\end{subfigure}
\begin{subfigure}[b]{.49\linewidth}
\includegraphics[width=\linewidth]{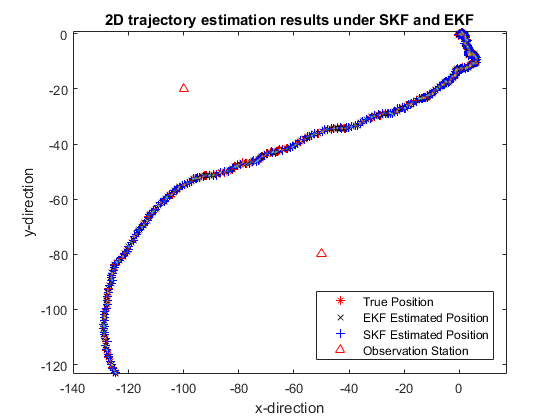}
\caption{Trajectory 8}
\end{subfigure}
\caption{Eight trajectories examples to compare SKF with EKF}
\label{8trajectory}
\end{figure}

One may notice that both EKF and SKF perform well in most part of each trajectory, except that the ellipsoids getting large in the interaction area between the trajectory and the straight line of two stations. Again, we calculate the $l_2$ norm of distance vectors to make further comparison in Table \ref{table2}.

\begin{table}[H]
\centering
\scriptsize
\begin{tabular}{|c|c|c|c|c|c|c|c|c|}
  \hline
   Trajectory & 1 & 2 & 3 & 4 & 5 & 6 & 7 & 8 \\
    \hline
  SKF & 3.22 & 2.88 & 4.36 & 2.47 & 3.70 & 12.37 & 2.83 & 1.91  \\
    \hline
  EKF & 5.37 & 3.62 & 27.81 & 3.82 & 3.67 & 7.79 & 13.31 & 3.27  \\
  \hline
\end{tabular}
\caption{Comparison of SKF and EKF in 8 trajectories}
\label{table2}
\end{table}

To check the estimation errors, we chose Trajectory 5 to repeat for 100 times and then get the following error distribution.

\begin{figure}[H]
\centering
\includegraphics[width=0.9\linewidth]{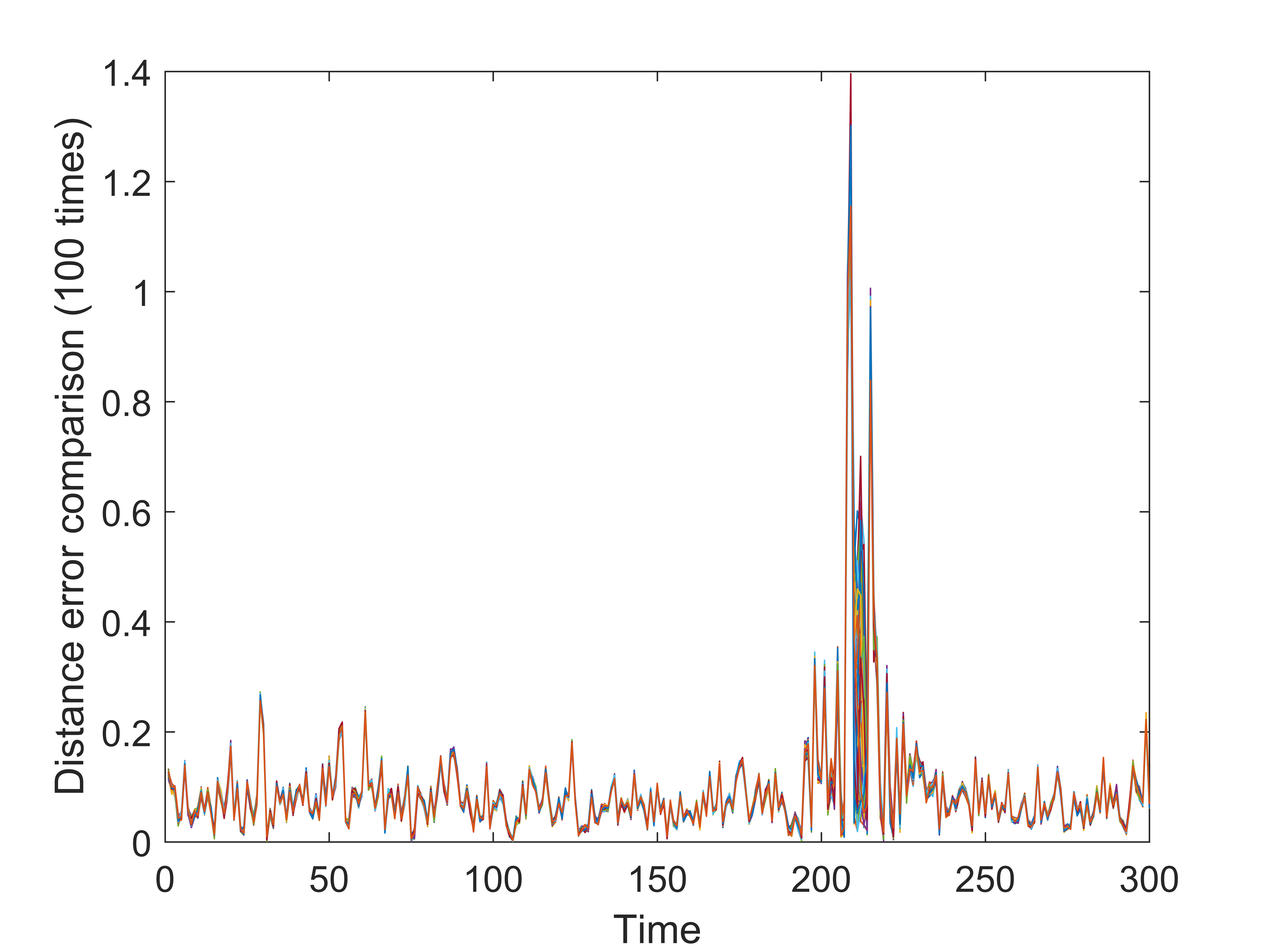}
\caption{Distance Error of SKF in 100 times}
\label{distance_error}
\end{figure}


From above Fig. \ref{distance_error} it is obvious to notice that the estimated error was getting larger when $k\in [200, 225]$, i.e., in Fig. \ref{8trajectory} (e) one may get worse estimation results in the intersection area of the line between the two observation stations and the trajectory of the vehicle. The following Fig. \ref{local_5} shows more local details in the interaction area of Trajectory 5, where the straight line connects the two observation stations. Not only the estimated ellipsoids getting larger in the interaction area, but the semi-major axes of the largest ellipsoid is perpendicular to the straight line.

\begin{figure}[H]
\centering
\includegraphics[width=0.9\linewidth]{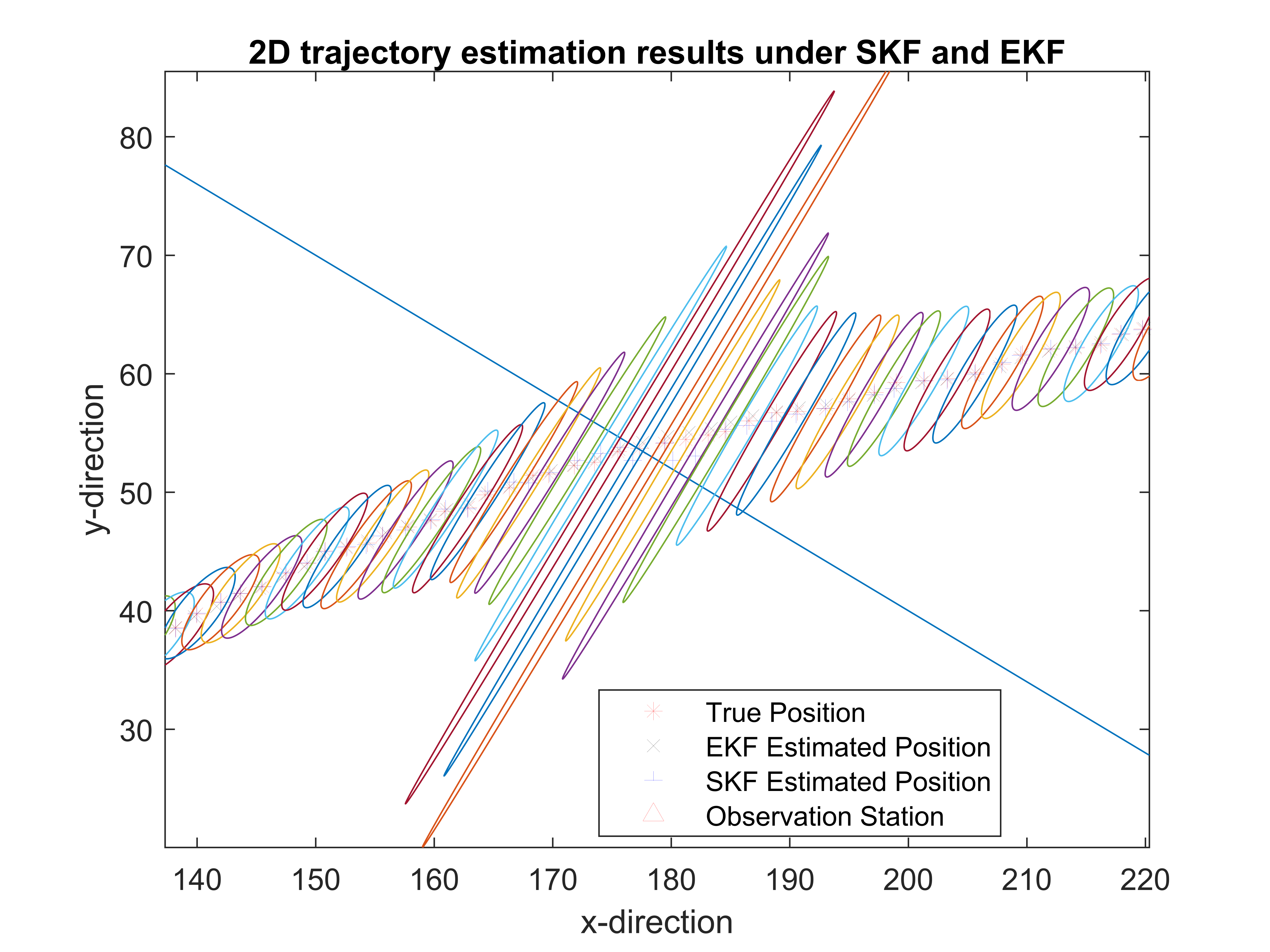}
\caption{Local estimation details near the interaction area}
\label{local_5}
\end{figure}

There exist two major reasons causing this phenomena.

Firstly, the angle set-membership uncertainty played a more significant role in the estimation. From \eqref{set_membership_setting} we notice that in Fig. \ref{8trajectory} Trajectory 5, the set-membership uncertainty of distance is $[-0.01,0.01]$ meter, and its counterpart in angle is $[-1,1]$ degree. The distance between the observation station and the interaction area is at least 85 meters, i.e., the uncertainty caused by angles would be $80\times\frac{\pi}{180}\mathrm{m}=1.4835\mathrm{m}$ (in the vertical direction of the straight line), which is greatly larger than the distance uncertainties $0.01 \mathrm{m}$ (in the parallel direction of the straight line).

Secondly, the criterion of the optimization problem in \eqref{op} in the SKF algorithm is the trace of a shape matrix. There are several minimum criterions to get one optimal ellipsoid given a shape matrix $S$, e.g., the trace of the shape matrix $\mathrm{tr}(S)$, the determinant of the shape matrix $|S|$, and the largest eigenvalue of the shape matrix $\lambda_M(S)$. Minimizing the largest eigenvalue $\lambda_M(S)$ smoothes the mean curvature and makes the ellipsoid more like a ball (circular in 2D case). Minimizing the trace or the determinant of the shape matrix produces an ellipsoid with small volume, but sometimes causes the ellipsoid getting oblate, i.e., more uncertainties in one certain direction in this example.

\begin{figure}[H]
\centering
\begin{subfigure}[b]{.49\linewidth}
\includegraphics[width=\linewidth]{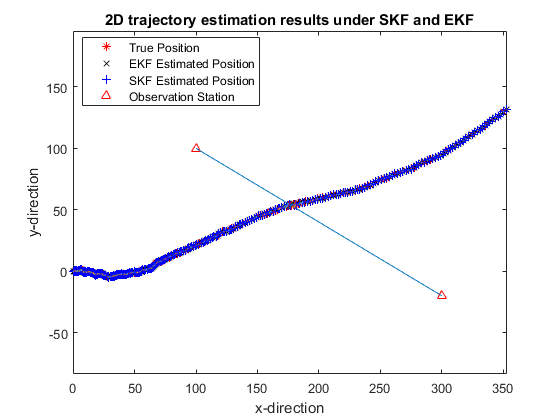}
\caption{Trajectory 5.1}
\end{subfigure}
\begin{subfigure}[b]{.49\linewidth}
\includegraphics[width=\linewidth]{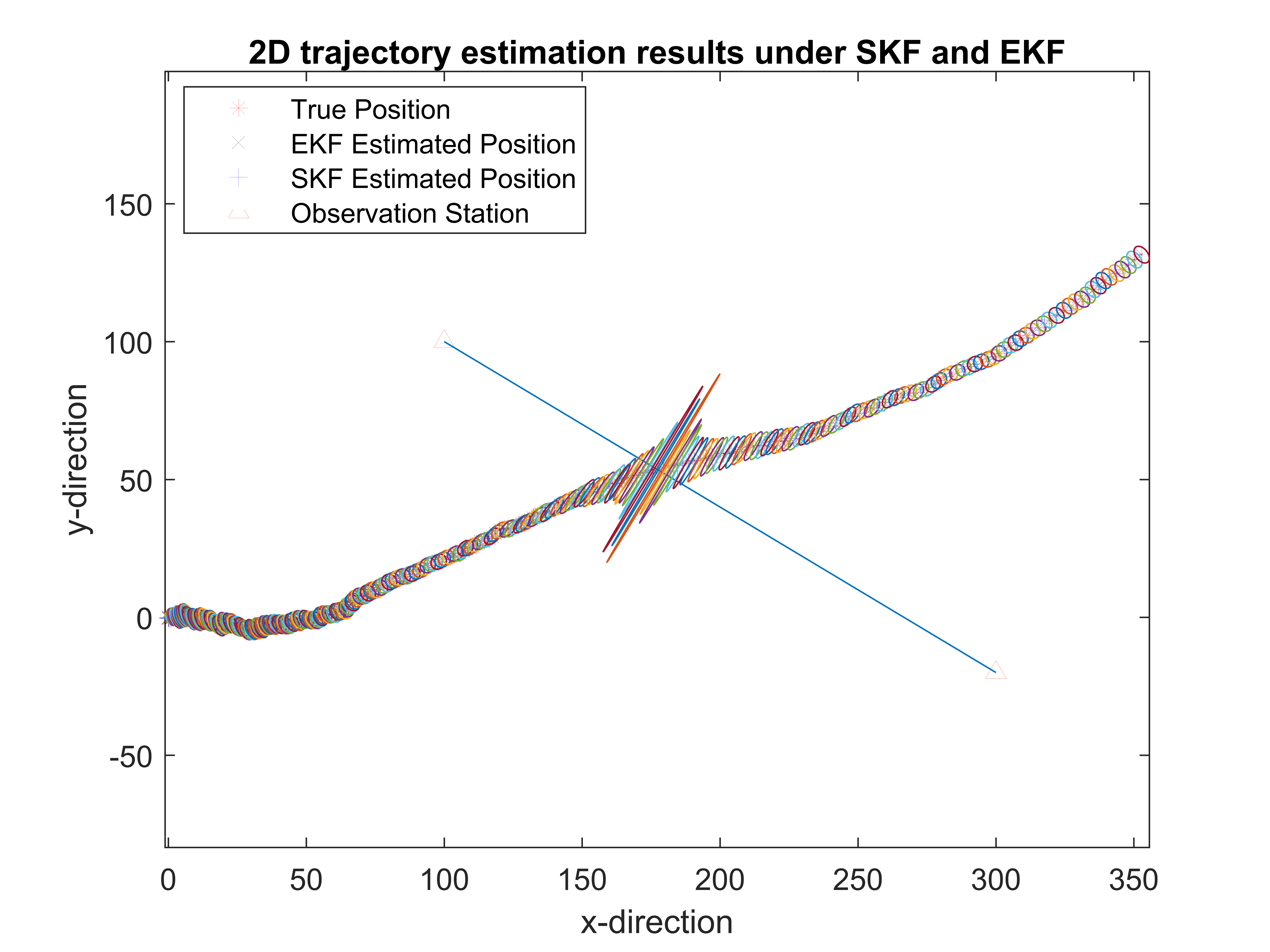}
\caption{Trajectory 5.2}
\end{subfigure}

\begin{subfigure}[b]{.49\linewidth}
\includegraphics[width=\linewidth]{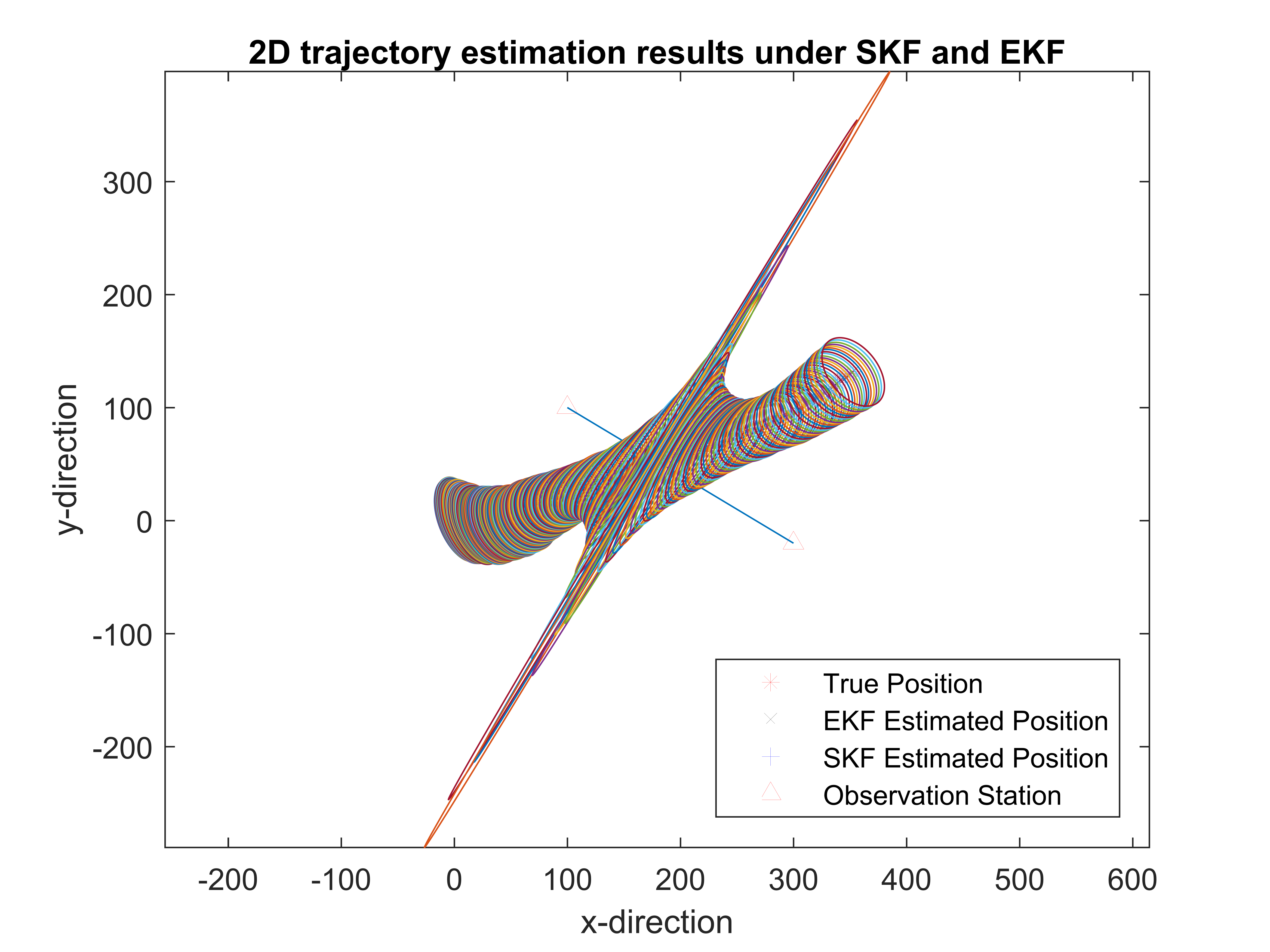}
\caption{Trajectory 5.3}
\end{subfigure}
\begin{subfigure}[b]{.49\linewidth}
\includegraphics[width=\linewidth]{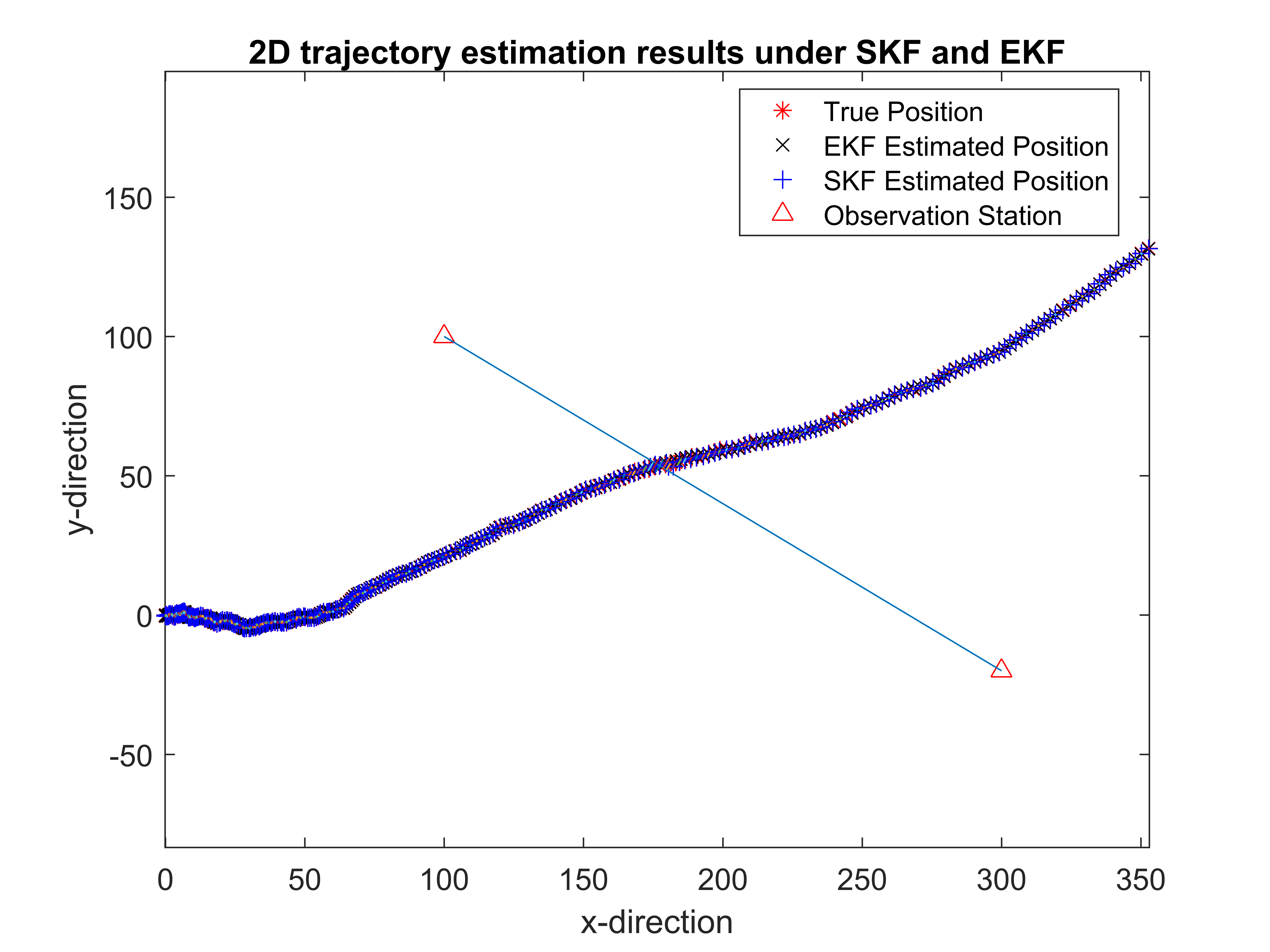}
\caption{Trajectory 5.4}
\end{subfigure}
\caption{Output ellipsoids are highly related to the initial settings.}
\label{terrible}
\end{figure}

Fig. \ref{terrible} shows that the output estimated ellipsoids are highly related to the initial boundary of the set-membership uncertainties in both equations. The principle semi-axes in Trajectory 5.3 (Su=diag([$100^2,100^2,50^2,50^2$]),\\
Sz=diag([$1^2,1^2,(\pi/1.8)^2,(\pi/1.8)^2$])) is 100 times larger than their counterparts in Trajectory 5.1 (Su=diag([$1^2,1^2,0.5^2,0.5^2$]),\\
Sz=diag([$0.01^2,0.01^2,(\pi/180)^2,(\pi/180)^2$]) and 10 times larger than their counterparts in Trajectory 5.2 (Su=diag([$10^2,10^2,5^2,5^2$]),\\
Sz=diag([$0.1^2,0.1^2,(\pi/18)^2,(\pi/18)^2$])), the the outputs of the SKF are getting very large. Both the input (accuracy of the instruments) and the output (estimated positions) in Trajectory 5.2 and 5.3 are not realistic and one more realistic example was shown in Trajectory 5.4 with initial shape matrix Su=diag([$0.5^2,0.5^2,0.5^2,0.5^2$]),\\
Sz=diag([$0.01^2,0.01^2,(\pi/180)^2,(\pi/180)^2$].

\section{Conclusion and Future Work}

 One cannot state that the new SKF is always better than the standard EKF, however, the performance of SKF is much more reliable than EKF in some cases (like in previous simulated experiments). To say the least, the SKF is one reasonable and applicable model when some unknown but bounded uncertainties were included in the nonlinear system. A difference with the standard Kalman filter is that, the estimated states are ellipsoids instead of single points, and every inner points of one ellipsoid have the same estimation status. But one still can choose a series of particular points in these ellipsoids if necessary. The output is reasonable considering the unknown but bounded uncertainties which were included in the original system, and extra information in the measurement equation was issued properly in the filtering step.

 Like other filter models, there is also some space for this SKF to improve. For instance, the shape matrices of the set-membership uncertainties in both system and measurement equation must be given properly at the beginning, and also the weighting parameter should be decided by the user or experts.

 The future work of our research includes deriving a similar algorithm for second order extend Kalman filter or unscented Kalman filter, using zonotopes or interval boxes to bound the unknown but bounded uncertainty, and minimizing the determinant or the largest eigenvalue of the shape matrix when solving the optimization problem. Last but not least, the stability of this algorithm should be carefully discussed considering that the state estimation problem is usually ill-posed as an inverse problem \cite{blank2007state}.

\section*{Acknowledgements}
This work was supported by the German Research Foundation (DFG) as part of the Research Training Group i.c.sens (RTG 2159). The authors acknowledge Prof. Steffen Sch\"on, Prof. Franz Rottensteiner and Prof. Claus Brenner from Leibniz University for the comments which greatly assisted the research. We also wish to thank Dr. Sergey Grigorian from Department of Mathematics in University of Texas Rio Grande Valley for his valuable suggestions regarding this paper's several aspects.


\bibliography{mybibfile}
\nocite{candy2016bayesian}
\nocite{klir2005uncertainty}
\nocite{li2018kalman}
\nocite{sarkka2006recursive}
\nocite{kurzhanskiui1997ellipsoidal}

\end{document}